\documentclass[pdflatex,sn-mathphys-num]{sn-jnl}


\usepackage{graphicx}%
\usepackage{multirow}%
\usepackage{amsmath,amssymb,amsfonts}%
\usepackage{amsthm}%
\usepackage{mathrsfs}%
\usepackage[title]{appendix}%
\usepackage{xcolor}%
\usepackage{textcomp}%
\usepackage{manyfoot}%
\usepackage{booktabs}%
\usepackage{algorithm}%
\usepackage{algorithmicx}%
\usepackage{algpseudocode}%
\usepackage{listings}%


\theoremstyle{thmstyleone}%
\newtheorem{theorem}{Theorem}
\newtheorem{lemma}[theorem]{Lemma}
\newtheorem{proposition}[theorem]{Proposition}

\theoremstyle{thmstyletwo}%

\newtheorem{example}[theorem]{Example}

\theoremstyle{thmstylethree}%
\newtheorem{definition}{Definition}%

\raggedbottom

\newcommand\seqnum[1]{\rm{#1}}
\DeclareMathOperator{\lcm}{lcm}
\def\NN{\mathbb{N}}
\def\PP{\mathbb{P}}
\def\ZZ{\mathbb{Z}}
\def\FF{\mathbb{F}}
\def\TT{\mathbb{T}}
\def\QQ{\mathbb{Q}}
\def\RR{\mathbb{R}}
\newcommand\ord{{\rm{ord}}}
\newcommand\fix{{\rm{Fix}}}
\newcommand\fixset{{\rm{FixSet}}}
\newcommand\notdivides{\mathrel{\kern-3pt\not\!\kern4.3pt\bigm|}}
\newcommand\divides\mid
\newcommand\smalldivides{\mathrel{\kern-2pt\kern3.5pt|}}
\newcommand\smallnotdivides{\mathrel{\kern-2pt\not\!\kern3.5pt|}}
\def\ge{\geqslant}
\def\le{\leqslant}
\def\eul{\rm{e}}
\DeclareMathOperator{\denom}{denom}
\DeclareMathOperator{\support}{supp}

\usepackage{tikz}
\usetikzlibrary{shapes.geometric}
\newcommand{\Ngon}[2][]{\vcenter{\hbox{\begin{tikzpicture}
\node[regular polygon,regular polygon sides=#2,draw,minimum size=1cm,#1](#2-gon){};
\foreach \X in {1,...,#2}{\fill (#2-gon.corner \X) circle[radius=2pt];}
\end{tikzpicture}}}}
\newcommand{\Twogon}{\vcenter{\hbox{\begin{tikzpicture}
\draw (0,1) -- (1,1);
  \fill (0,1) circle (2pt);
  \fill (1,1) circle (2pt);
\end{tikzpicture}}}}

\begin{document}

\title[Local structure of classical sequences,
regular primes, and dynamics]{Local structure of classical sequences,
regular primes, and dynamics}


\author*[1]{\fnm{Sawian} \sur{Jaidee}}\email{jsawia@kku.ac.th}
\equalcont{These authors contributed equally to this work.}

\author{\fnm{Patrick} \sur{Moss}}
\equalcont{These authors contributed equally to this work.}

\author[2]{\fnm{Thomas} \sur{Ward}}\email{tbward@gmail.com}
\equalcont{These authors contributed equally to this work.}

\affil*[1]{\orgdiv{Department of Mathematics}, \orgname{Khon Kaen University}, \city{Khon Kaen}, \postcode{40002}, \state{Muang District}, \country{Thailand}}

\affil[2]{\orgdiv{Department of Mathematical Sciences}, \orgname{Durham University}, \city{Durham}, \postcode{DH1 3LE}, \state{County Durham}, \country{United Kingdom}}


\abstract{We introduce the notions of local realizability
at a prime and algebraic realizability of
an integer sequence. After discussing
this notion in general we consider it for the
Euler numbers, the Bernoulli denominators, and
the Bernoulli numerators.
This gives, in particular, a dynamical characterisation
of the Bernoulli regular primes.
Algebraic realizability of the Bernoulli
denominators is shown at every prime,
giving a different perspective on the
arithmetic structure of this sequence.
We show that the sequence of Euler
numbers cannot be realized on a nilpotent
group, marking a fundamental difference
between the nature of the Bernoulli and Euler
sequences.}

\keywords{Bernoulli number,
Euler number,
tangent number,
Domb number,
Delannoy number,
realizable sequence,
local properties,
regular prime,
algebraic realizability,
locally nilpotent group}


\pacs[MSC Classification]{11B37, 11B86, 37C25}

\maketitle

\section{Introduction}

We are concerned here with `local' properties of integer
sequences, meaning the~$p$-part of the sequence for
a given prime~$p$. This notion is of particular relevance
for the question of `algebraic realizability'---does the
sequence count periodic points for some group endomorphism?
We will discuss this kind of question in general and
then specifically in relation to sequences derived from
the classical Bernoulli and Euler numbers.

Some parts of the results here---in particular, the
`global' realizability
of the Euler sequence and the Bernoulli denominators---have
been proved several times, in particular by
Arias de Reyna~\cite{zbMATH02203410}. Where our
arguments are different they are outlined here
nonetheless, in part because in some cases they
expose different aspects of these sequences.
Two striking examples are that the elegant argument
of Arias de Reyna also shows a quite unexpected
`shift invariance' property;
on the other hand our arguments explain
some (but not all) of the `local' and the `algebraic'
properties of these sequences.
To some extent we wish to use the argument
below for algebraic realizability of the Bernoulli
denominator sequence as an advertisment
for the idea of studying integer sequences
locally.

As with any arguments involving the Euler and Bernoulli
numbers it is likely that we have inadvertantly reinvented congruences
and identities that are well-known to somebody, and
missed simplifications flowing from congruences
we are not aware of. We have done our
best to appropriately calibrate which congruences
might be viewed as `standard' and
which require justification.
Similarly, some of the elementary group theory
results we need are undoubtedly well-known in
the right quarters.

\section{Notation}

Sequences will begin with
index~$1$ (not~$0$)
and are denoted
\[
a=(a_n)=(a_n)_{n\ge1}=(a_n)_{n\in\NN}.
\]
Here~$\NN$ denotes the natural
numbers~$\{1,2,3,\dots\}$.
We will always identify the
first few terms of a sequence explicitly
for two reasons:
Firstly, (almost) all the properties we will be dealing with are
{\it{a priori}} acutely sensitive to shifting the
sequence;
secondly,
where sequences are identified with an entry (denoted~A
followed by a number)
in the Online Encyclopedia of Integer Sequences~\cite{OEIS}
the claimed equality only means identity up
to a possible shift.

We denote the rational primes~$\{2,3,5,\dots\}$
by~$\mathbb{P}$ and for~$p\in\mathbb{P}$ the~$p$-part of an integer~$n$
is written~$\lfloor n\rfloor_p=p^{\ord_p(n)}$, where~$\ord_p(n)$
is the number of times that~$p$ divides into~$n$.
For a sequence~$a$ we then write~$\lfloor a\rfloor_p$ for the
sequence of~$p$-parts~$(\lfloor a_n\rfloor_p)$.

A `dynamical system' here means
a map~$T\colon X\to X$ on a set with the property
that
\[
\fixset_n(T)
=
\{x\in X\mid T^nx=x\}
\]
is a finite set for all~$n\ge1$, where~$T^n$ denotes the~$n$th
iterate of~$T$. We write~$\fix_n(T)$ for the
cardinality~$\vert\fixset_n(T)\vert$.
We also as usual write~$\mu$ for the
M{\"o}bius function and~$\phi$
for the Euler function.

\begin{definition}
A sequence~$a=(a_n)_{n\ge1}$ of non-negative integers
is said to be
\begin{itemize}
\item \emph{(globally) realizable} if there is a dynamical
system~$(X,T)$ that realizes it, meaning that~$a_n=\fix_n(T)$ for all~$n\ge1$,
\item \emph{realizable at a prime~$q$} if~$(\lfloor a_n\rfloor_q)_{n\ge1}$
is a realizable sequence, and
\item \emph{(property) algebraically} realizable if there is a
group~$X$ (with that property) and an endomorphism~$T\colon X\to X$
with~$a_n=\fix_n(T)$ for all~$n\ge1$.
\end{itemize}
\end{definition}

Thus we can speak about `abelian, nilpotent,\dots realizable'
and drop the word `algebraically' where it is clear from
the context. It would be natural to call realizability
at a prime `local' realizability, but we will also make
use of local properties of groups in the sense of finite
generation and so avoid this to minimize confusion.
Thus the concept that any number theorist might
wish to call `locally realizable at~$p$' will simply be
`realizable at~$p$'.

The topological analogue of algebraic
realizability---imposing additional
continuity or smoothness conditions
on the system witnessing realizability---turns out not
to matter in the following sense. Windsor~\cite{MR2422026}
showed that if a sequence is realizable in the abstract
combinatorial sense above then the
map realizing it may be taken to be a~$C^{\infty}$ diffeomorphism
of the annulus. Constraining the space on which
the realizing map acts brings
in different questions.
Realizability by a continuous map on a manifold
entails additional conditions arising from the
Lefschetz fixed-point theorem, but a full
picture is not yet available.
For example, Dorota Chanko in Gda{\'n}sk
has recently shown that
realizability by a
continuous map on the~$2$-sphere
requires abstract realizability,
the expected combinatorial conditions
arising directly from the
Lefschetz fixed-point theorem on the~$2$-sphere,
and further non-trivial combinatorial conditions.

Clearly the term-by-term product of realizable
(or algebraically realizable) sequences
gives a realizable (respectively, an algebraically realizable) sequence, so
if~$a$ is realizable at~$q$ for all~$q\in\mathbb{P}$
then~$a$ is realizable.
This may be viewed as a simple `local-to-global' principle.
In our context it is the `global-to-local' direction that
brings in the complexities.
The extent to which the global-to-local converse
may or may not hold involves subtle information about the
arithmetic of the sequence involved.

Before proceeding it may be helpful to say a few words
about how these properties may be proved for a given
sequence.
To show that a sequence~$a$ of non-negative integers
is realizable there are two possible approaches.
The first is serendipity: Have prior knowledge of a map~$T$
with~$\fix_n(T)=a_n$ for all~$n\ge1$.
The second is brute force: Prove directly
that~$\sum_{d\smalldivides n}\mu\bigl(\frac{n}{d}\bigr)a_d$
is non-negative (the `sign condition') and divisible by~$n$
(the `Dold congruence') for all~$n\ge1$.
Proving the Dold congruence is made considerably
easier by a result of Arias de Reyna~\cite[Th.~3]{zbMATH02203410}:
It is enough to show that for every~$p\in\PP$
with~$\gcd(n,p)=1$ and~$m\ge1$ we have~$a_{np^m}\equiv a_{np^{m-1}}$
modulo~$p^m$.
To show that a sequence~$a$ is algebraically realizable
there seems to be only one approach, and that is to
construct a group endomorphism that realizes~$a$.
That is, we are not aware of a purely combinatorial
characterization in the spirit of the Dold
and sign conditions for the property of being algebraically
realizable.

The reverse direction is more straightforward: To show that
a sequence is \emph{not} realizable (at~$p\in\PP$) is simply a matter
of finding a witness to that fact. That is, some~$n$ (and~$p\in\PP$)
with the property that the~$n$th term
(or the~$p$-part of the~$n$th term)
of the sequence exhibits a failure to satisfy the Dold congruence or the sign condition.
To show that a realizable sequence is not algebraically realizable
is more difficult because of the lack of a
characterization of the property
and the great diversity of group automorphisms
(for example, the automorphisms of one-dimensional
solenoids with given entropy already exhibit
many uncountable families of sequences
with differing growth characteristics;
see~\cite{MR3033948} and~\cite{MR1458718}).

\section{Algebraic Realizability}

Notice that if a sequence~$a$ is algebraically
realized by a group endomorphism~$\theta$
on a group~$G$
then~$\fixset_n(\theta)$ is a finite~$\theta$-invariant
subgroup of~$G$ for each~$n\ge1$. It follows that
any algebraically realizable sequence is a
\emph{divisibility} sequence in the sense that~$m\divides n$
implies~$\fix_m(\theta)\divides\fix_n(\theta)$
by Lagrange's theorem.

\begin{lemma}\label{lemmaalgebraicreduction}
If~$a$ is algebraically realizable then it can
be realized by an automorphism of a countable
locally finite group. If a bounded sequence~$a$
is algebraically realizable then it can
be realized by an automorphism of a group
with~$\max\{a_n\mid n\ge1\}$ elements.
\end{lemma}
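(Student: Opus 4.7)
The plan is to pass from an arbitrary algebraic realization~$\theta\colon G\to G$ of~$a$ to a restriction living on the union of all finite fixed subgroups. Writing~$F_n=\fixset_n(\theta)$, each~$F_n$ is a subgroup of~$G$ because it is the equalizer of the endomorphism~$\theta^n$ with the identity, and the nesting~$F_m,F_n\subseteq F_{\lcm(m,n)}$ is immediate. Hence $H:=\bigcup_{n\ge1}F_n$ is a subgroup of~$G$; it is countable as a countable union of finite sets and locally finite because any finite subset~$S\subseteq H$ lies in a single~$F_k$, on taking~$k$ to be the~$\lcm$ of indices witnessing the memberships of elements of~$S$, so~$\langle S\rangle\subseteq F_k$ is finite.

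Next I would verify that~$\theta$ restricts to an \emph{automorphism} of~$H$. Invariance~$\theta(F_n)\subseteq F_n$ follows from $\theta^n(\theta x)=\theta(\theta^n x)=\theta x$. Injectivity of~$\theta|_H$ holds because if~$\theta(x)=\theta(y)$ with~$x,y\in F_k$ for a common~$k$, then $x=\theta^k(x)=\theta^{k-1}(\theta(x))=\theta^{k-1}(\theta(y))=\theta^k(y)=y$; surjectivity follows since $x=\theta(\theta^{n-1}x)$ with~$\theta^{n-1}x\in F_n\subseteq H$ for any~$x\in F_n$. Because~$\fixset_n(\theta|_H)=F_n$, the restriction~$\theta|_H$ realizes~$a$ on the countable locally finite group~$H$, establishing the first statement.

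For the bounded case, let~$M=\max\{a_n\mid n\ge1\}$, attained at some~$N$ because the sequence takes only finitely many integer values. For any~$n\ge1$, the inclusions $F_N,F_n\subseteq F_{\lcm(n,N)}$ together with the bound~$|F_{\lcm(n,N)}|\le M=|F_N|$ force $F_{\lcm(n,N)}=F_N$, whence $F_n\subseteq F_N$. Consequently $H=F_N$ has exactly~$M$ elements and $\theta|_{F_N}$ is the desired automorphism.

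The main obstacle will be the passage from endomorphism to automorphism: one must check that~$\theta$ really is invertible on the subgroup carved out, which is ultimately the observation that~$\theta^{k-1}$ provides a two-sided inverse for~$\theta$ on~$F_k$. A related conceptual point is the temptation to work at the outset on a single~$F_n$; this cannot suffice in the general case because~$a_m$ would be undercounted whenever~$F_m\not\subseteq F_n$, so the union~$H$ is genuinely needed before the bounded hypothesis can collapse it back to a single~$F_N$.
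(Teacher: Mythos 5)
Your proof is correct and follows essentially the same route as the paper: form $H=\bigcup_n\fixset_n(\theta)$, show it is a countable locally finite subgroup on which $\theta$ restricts to an automorphism realizing~$a$, and in the bounded case collapse $H$ to the single fixed-point set of maximal cardinality. The only cosmetic differences are that you prove injectivity directly via $\theta^{k-1}$ rather than by noting the kernel is trivial, and you phrase the bounded-case argument via $F_{\lcm(n,N)}=F_N$ rather than by a containment contradiction; both are immaterial.
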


\begin{proof}
Assume first that~$a$ is realized by an endomorphism~$\theta\colon G\to G$
of a group~$G$
and let~$x$ and~$y$ be elements of~$X=\bigcup_{n\ge1}\fixset_n(\theta)$
which is a countable set as it is a countable union of
finite sets.
Then there are~$m,n\ge1$ with~$\theta^mx=x$ and~$\theta^ny=y$
so~$x,y\in\fixset_{\lcm(m,n)}(\theta)$ and hence~$xy,x^{-1}\in\fixset_{\lcm(m,n)}(\theta)$.
It follows that~$X$ is a subgroup of~$G$
and is~$\theta$-invariant since each~$\fixset_n(\theta)$
is~$\theta$-invariant. Clearly~$\theta\vert_{X}\colon X\to X$
algebraically realizes~$a$ and~$\theta\vert_{X}$
is surjective.
Let~$x_j\in\fixset_{n_j}(\theta\vert_X)$ for~$j=1,\dots,k$
so~the group they generate~$\langle x_1,\dots,x_k\rangle$
is a subgroup of the finite group~$\fixset_{\lcm(n_1,\dots,n_k)}(\theta\vert_X)$
and hence is itself finite.
It follows that~$X$ is a locally finite group.
If~$x\in X$ has~$\theta(x)=e$
(the identity element of~$G$)
then~$x\in\fixset_n(\theta)$ for some~$n\ge1$
and hence~$x=\theta^nx=\theta^{n-1}e=e$,
so~$\theta\vert_X$ is injective, showing the
first statement.

If~$a$ is bounded choose~$m\ge1$ to have
$a_m=\max\{a_n\mid n\in\NN\}$.
If for some~$n\ge1$ the set~$\fixset_n(\theta)$
is not a subgroup of~$\fixset_m(\theta)$
then
\[
\fixset_m(\theta)\subsetneq\fixset_m(\theta)\cup\fixset_n(\theta)
\]
so~$\fixset_m(\theta)\subsetneq\fixset_{\lcm(m,n)}(\theta)$
and hence~$a_m<a_{\lcm(m,n)}$, contradicting the
choice of~$m$. It follows that~$X=\fixset_m(\theta)$,
\end{proof}

The extent of our ignorance about how to characterize the
algebraic realizability
property may be seen in the following simple example.

\begin{example}
The periodic linear recurrence
\[
a=(1,1,1,1,6,1,1,1,1,6,\dots)=\seqnum{A010122}
\]
is a divisibility sequence that is realizable but not algebraically
realizable.
To see the first property, notice that the sequence is realized
by the permutation~$T=(12345)(6)$ on the set~$X=\{1,2,3,4,5,6\}$.
Lemma~\ref{lemmaalgebraicreduction} shows that if the
sequence is algebraically realizable then there is
a group endomorphism of a group with six
elements
that realizes~$a$. A na{\"i}ve (but on this
occasion successful) approach is to simply list the
automorphisms of the two possibilities~$\ZZ/6\ZZ$ and~$S_3$
and verify that none of them realize~$a$.
\end{example}

The next example shows that the class of sequences realizable
by nilpotent group endomorphisms is strictly larger than the
class of abelian realizable sequences.

\begin{example}\label{exampledihedral}
Let~$\theta\colon D_8\to D_8$ be the outer
automorphism of the dihedral group
\[
D_8=\langle u,v\mid u^4=v^2=e,vuv^{-1}=u^{-1}\rangle
\]
defined by~$\theta(u)=u$ and~$\theta(v)=uv$.
Then~$\theta$ algebraically realizes the
sequence~$a=(4,4,4,8,4,4,4,8,\dots)=\seqnum{A226276}$.
If there is an endomorphism~$\theta_a\colon G\to G$
of an abelian group that realizes~$a$ then by
Lemma~\ref{lemmaalgebraicreduction} we may assume
that~$G$ has eight elements and~$\theta$ is
an automorphism of order~$4$ with~$\fix_1(\theta)=4$
and so~$G/\fixset_1(\theta)$ has order~$2$.
For any~$x\in G\setminus\fixset_1(\theta)$
we must therefore have~$2x\in\fixset_1(\theta)$
and
\[
G\setminus\fixset_1(\theta)=\{x+g\mid g\in\fixset_1(\theta)\}
\]
since~$\fix_1(\theta)=4$.
The orbit~$O(x)=\{x,\theta(x),\theta^2(x),\theta^3(x)\}$
of~$x$ under~$\theta$
has no more than four elements since~$\theta^4=I$. We
claim that~$\vert O(x)\vert=4$
and~$O(x)\cap\fixset_1(\theta)=\varnothing$
by the following argument:
If~$\theta^nx\in\fixset_1(\theta)$
for some~$n$ with~$0<n\le3$ then~$\theta^{n+1}(x)=\theta^n(x)$
and so~$\theta(x)=x$ since~$\theta$ is an automorphism,
contradicting the assumption that~$x\notin\fixset_1(\theta)$.
If~$\theta^m(x)=\theta^n(x)$ with~$0\le m<n\le3$
then~$0<n-m\le3$
and hence~$\theta^{n-m}(x)=x$,
also a contradiction.
It follows that~$O(x)$ comprises the four
elements of~$G\setminus\fixset_1(\theta)$,
so~$\theta(x)=x+g$ for some~$g\in\fixset_1(\theta)$.
Since~$2x\in\fixset_1(\theta)$
it follows that~$2x=\theta(2x)=2x+2g$
and so~$2g=0$. On the other hand~$x\neq\theta^2(x)$
and so~$x\neq\theta^2(x)=\theta(x+g)=x+2g=x$,
a contradiction. That is, the sequence~$a$
is algebraically realizable but is not realizable
by an endomorphism of an abelian group.
\end{example}

The main result in this direction shows that
nilpotent realizability is extremely well-behaved
locally.

\begin{theorem}\label{theoremNilpotent}
A sequence is nilpotently
realizable if and only if it is locally
nilpotently
realizable at every prime.
\end{theorem}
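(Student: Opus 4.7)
The plan is to exploit the primary decomposition of a periodic (locally) nilpotent group: in such a group the elements of~$p$-power order form a characteristic subgroup, and the whole group is the restricted direct product of these primary components. Since characteristic subgroups are invariant under every endomorphism, this decomposition automatically respects any given~$\theta$, and the theorem should drop out by passing to and assembling the $p$-primary parts.

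For the forward direction I would first apply Lemma~\ref{lemmaalgebraicreduction} to assume that the ambient group~$G$ is countable and locally finite, noting that nilpotency is inherited by the subgroup~$\bigcup_n\fixset_n(\theta)$. I would then invoke the structure theorem for periodic locally nilpotent groups to write~$G=\bigoplus_p G_p$ with~$G_p$ the $p$-torsion subgroup, and set~$\theta_p=\theta\vert_{G_p}$. The observation that each finite nilpotent group~$\fixset_n(\theta)$ is the internal direct product of its Sylow subgroups then gives
\[
\fix_n(\theta_p)=\vert\fixset_n(\theta)\cap G_p\vert=\lfloor\fix_n(\theta)\rfloor_p=\lfloor a_n\rfloor_p,
\]
so~$\theta_p$ realizes the $p$-part of~$a$ on a nilpotent $p$-group.

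For the converse I would take, for each prime~$p$, a nilpotent realization~$\theta_p\colon G_p\to G_p$ of the sequence~$(\lfloor a_n\rfloor_p)$ (locally finite by Lemma~\ref{lemmaalgebraicreduction}) and form the external restricted direct product~$G=\bigoplus_p G_p$ with coordinate-wise endomorphism~$\theta=\bigoplus_p\theta_p$. Because each~$a_n$ has only finitely many prime divisors, the set
\[
\fixset_n(\theta)=\bigoplus_p\fixset_n(\theta_p)
\]
is a finite group of order~$\prod_p\lfloor a_n\rfloor_p=a_n$, so~$\theta$ realizes~$a$. Local nilpotency of~$G$ follows at once, because every finitely generated subgroup is supported on finitely many coordinates and so lies inside a finite direct product of nilpotent groups.

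The main obstacle I expect is conceptual rather than computational, and it lies in the backward direction: a direct sum of nilpotent $p$-groups of unbounded nilpotency class is not itself nilpotent, only locally nilpotent. This forces the theorem to be read in the locally nilpotent framework (consistent with the keyword listed in the abstract), and care must be taken to distinguish the strict finite-class notion (which applies on each finite fixed-point set and on each Sylow factor) from the infinite group that realizes the whole sequence. Once this framework is fixed, the argument is a clean application of the primary decomposition and of Lemma~\ref{lemmaalgebraicreduction}.
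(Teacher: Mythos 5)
Your proof is correct, and it reaches the same destination by a somewhat tidier route. In the forward direction the paper develops the $p$-primary structure from scratch: Lemma~\ref{lemmaSylowTrick2} proves that a locally finite, locally nilpotent group has a unique Sylow $p$-subgroup (this is precisely the non-trivial content of the primary decomposition you invoke), Lemma~\ref{lemmaSylow1} shows this passes to subgroups, and Lemma~\ref{lemmaSylowGivesLocallyNilpotent} then shows the restriction of $\theta$ to that Sylow subgroup realizes the $p$-part. Your appeal to the structure theorem for periodic locally nilpotent groups plus the Sylow decomposition of the finite nilpotent groups $\fixset_n(\theta)$ collapses these three lemmas into one step. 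In the backward direction you diverge more noticeably: the paper takes the full Cartesian product $\prod_p X_p$, passes to $X=\bigcup_n\fixset_n(\theta)$ by Lemma~\ref{lemmaalgebraicreduction}, and then needs Lemma~\ref{lemmaLocallyNilpotent2} to show $X$ is locally nilpotent; you instead take the restricted direct product $\bigoplus_p G_p$ from the outset, which makes $\fixset_n(\theta)=\bigoplus_p\fixset_n(\theta_p)$ finite immediately (as $\lfloor a_n\rfloor_p=1$ for almost all $p$) and local nilpotency trivial (finitely generated subgroups live on finitely many coordinates). This is a clean simplification and renders the paper's Lemma~\ref{lemmaLocallyNilpotent2} unnecessary. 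Finally, your closing caveat is a sharp one: the construction in the converse direction produces a locally nilpotent group, not a nilpotent one, and the paper's own Proposition~\ref{propositionLocalToGlobalForNilpotent} has the same feature while nevertheless concluding ``nilpotently realizable''; the honest reading of the theorem (consistent with the abstract's keyword) is in the locally nilpotent framework throughout, and you are right to flag it.
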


An integer sequence~$a$ is called a~$p$-sequence
for a prime~$p$ if~$a_n=p^{\ord_p(a_n)}$ for all~$n\ge1$,
so Example~\ref{exampledihedral} is a~$2$-sequence.
We start by showing a local-to-global principle
for nilpotent realizability.

\begin{lemma}\label{lemmaLocallyNilpotent1}
If a~$p$-sequence is algebraically realizable
by a group endomorphism~$\theta\colon G\to G$
and~$G=\bigcup_{n\ge1}\fixset_n(\theta)$ then~$G$
is a locally finite~$p$-group and so is locally
nilpotent.
\end{lemma}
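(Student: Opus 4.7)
The plan is to verify three claims in sequence: that $G$ is locally finite, that every element of $G$ has $p$-power order, and that these together force $G$ to be locally nilpotent. The argument rests almost entirely on Lagrange's theorem together with the subgroup-generation step already used in the proof of Lemma~\ref{lemmaalgebraicreduction}.

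First I would establish local finiteness by reusing the relevant piece of the earlier proof: for any finite collection $x_1,\dots,x_k\in G$ with $x_j\in\fixset_{n_j}(\theta)$, the subgroup $\langle x_1,\dots,x_k\rangle$ is contained in $\fixset_{\lcm(n_1,\dots,n_k)}(\theta)$, which is finite by assumption. Hence every finitely generated subgroup of $G$ is finite. Second, I would show that every element of $G$ has $p$-power order: for $x\in G$ pick $n$ with $x\in\fixset_n(\theta)$; then by Lagrange the order of $x$ divides $\vert\fixset_n(\theta)\vert=\fix_n(\theta)=a_n$, and since $a$ is a $p$-sequence this value is a power of $p$, so $\ord(x)$ is a power of $p$.

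Putting these together, an arbitrary finitely generated subgroup $H\le G$ is finite, and each of its elements has $p$-power order, so by Cauchy's theorem no prime $q\neq p$ can divide $\vert H\vert$; thus $H$ is a finite $p$-group and hence nilpotent, which is precisely the assertion that $G$ is locally nilpotent. There is no real obstacle here: the only content beyond Lemma~\ref{lemmaalgebraicreduction} is that the $p$-sequence hypothesis forces each finite piece $\fixset_n(\theta)$ to have $p$-power order, and Lagrange together with Cauchy transport this $p$-power property to the orders of all finitely generated subgroups.
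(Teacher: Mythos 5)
Your argument is correct and follows essentially the same route as the paper: local finiteness comes from Lemma~\ref{lemmaalgebraicreduction}, Lagrange's theorem forces every element's order to divide some $\fix_n(\theta)=a_n$ and hence to be a power of $p$, and a finite group whose elements all have $p$-power order is a $p$-group (hence nilpotent), which gives local nilpotency. The only cosmetic difference is that you argue directly (with an explicit appeal to Cauchy's theorem) whereas the paper phrases the $p$-power-order step as a contradiction.
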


\begin{proof}
The group~$G$ is locally finite by Lemma~\ref{lemmaalgebraicreduction},
so suppose that
\[
x\in\fixset_n(\theta)\subset G
\]
has order~$o(x)$ not a power
of~$p$. Since~$G$ is locally finite, there is a prime~$q\neq p$
such that~$q\divides o(x)$. Since~$x\in\fixset_n(\theta)$
we must have~$q\divides\fix_n(\theta)$, contradicting the
assumption that~$(\fix_n(\theta))$ is a~$p$-sequence
and~$G$ is a~$p$-group.
It follows that~$G$ is locally nilpotent since
any finitely generated subgroup of~$G$ is
finite and hence nilpotent.
\end{proof}

\begin{lemma}\label{lemmaLocallyNilpotent2}
Assume that~$G$ is a Cartesian product~$\prod_{p\in\PP}G_p$
where each~$G_p$ is a locally finite~$p$-group and~$X<G$
is locally finite. Then~$X$ is locally nilpotent.
\end{lemma}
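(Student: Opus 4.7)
The plan is to take an arbitrary finitely generated subgroup $F\le X$, show that $F$ is contained in a direct product of finitely many finite $p$-groups for distinct primes $p$, and then invoke the classical fact that a finite direct product of nilpotent groups is nilpotent. Since $X$ is locally finite by hypothesis, $F$ is already finite, so in particular every element of $F$ is torsion; the task is to organize this torsion across the primes.

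The key observation is a statement about torsion in the ambient product $G=\prod_{p\in\PP}G_p$. For $x=(x_p)_p\in G$ each component $x_p$ lies in the locally finite $p$-group $G_p$ and hence has order a power of $p$. The order of $x$ in the direct product is $\prod_p o(x_p)$, and this is finite only when $x_p=e$ for all but finitely many $p$; that is, every torsion element of $G$ is supported on a finite set of primes. Applying this to each of the finitely many generators of $F$ and taking the union of their supports, I obtain a finite set $S\subset\PP$ with $F\le\prod_{p\in S}\pi_p(F)$, where $\pi_p\colon G\to G_p$ denotes the projection.

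For each $p\in S$ the image $\pi_p(F)$ is a finitely generated subgroup of the locally finite $p$-group $G_p$, hence a finite $p$-group, and in particular nilpotent. A finite direct product of nilpotent groups is nilpotent, with nilpotency class bounded by the maximum of the factors' classes, so $\prod_{p\in S}\pi_p(F)$ is nilpotent and therefore so is its subgroup $F$. Since $F$ was an arbitrary finitely generated subgroup of $X$, the group $X$ is locally nilpotent.

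There is no serious obstacle in the argument; the only step that requires care is the passage from finite generation in $X$ to support on finitely many primes, which is forced by combining the torsion condition inherited from the local finiteness of $X$ with the fact that distinct $G_p$ consist of $p$-torsion for distinct primes. Once this observation is isolated, the rest is a direct reduction to the theory of finite nilpotent groups.
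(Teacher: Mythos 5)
Your proof is correct and follows essentially the same route as the paper: both arguments isolate the finite support of each element (forced by finite order in the Cartesian product), reduce a finitely generated subgroup to a subgroup of a finite product of finite $p$-groups over the union of the generators' supports, and invoke the nilpotence of such a product. The only cosmetic difference is that you work with projections $\pi_p(F)$ and settle for the inclusion $F\le\prod_{p\in S}\pi_p(F)$, whereas the paper writes the generated subgroup as an equality $\langle x^{(1)},\dots,x^{(m)}\rangle=\prod_p X_p$; both are fine since subgroups of nilpotent groups are nilpotent.
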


\begin{proof}
Let~$x=(x_p)\in X<\prod_{p\in\PP}G_p$ and let~$k=o(x)$
which is finite since~$X$ is locally finite.
Then~$o(x_p)\divides k$ and~$o(x_p)$
is a power of~$p$ for all~$p\in\PP$,
so~$x_p=1$ for all but finitely many primes~$p$.
Call the finite set~$\support(x)=\{x_p\mid(x_p)\in X\text{ and }o(x_p)>1\}$
the support of~$x$.
Given a finite subset~$\{x^{(1)},x^{(2)},\dots,x^{(m)}\}\subset X$
let
\[
S=S(\{x^{(1)},x^{(2)},\dots,x^{(m)}\})
=
\bigcup_{r=1}^{m}\support\bigl(x^{(r)}\bigr)
\]
and let
\[
X_p
=
\begin{cases}
\langle S\cap G_p\rangle&\text{if }S\cap G_p\neq\varnothing\\
\{1\}&\text{if not.}
\end{cases}
\]
Since~$S$ is finite and~$G_p$ is a locally
finite~$p$-group, the subgroup~$X_p<G_p$
is nilpotent. Moreover~$\langle x^{(1)},x^{(2)},\dots,x^{(m)}\rangle
=\prod_{p\in\PP}X_p$ and this product
has only a finite number of non-trivial groups.
It follows
that~$\langle x^{(1)},x^{(2)},\dots,x^{(m)}\rangle$ is nilpotent
and hence~$X$ is locally nilpotent.
\end{proof}

The next result proves one direction of Theorem~\ref{theoremNilpotent}.

\begin{proposition}\label{propositionLocalToGlobalForNilpotent}
If~$a$ is locally nilpotently realizable at every
prime then~$a$ is locally nilpotently realizable.
\end{proposition}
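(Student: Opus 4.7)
The plan is to take local nilpotent realizations for each prime and glue them together into a product, then extract the relevant locally finite subgroup and invoke Lemma~\ref{lemmaLocallyNilpotent2} to conclude local nilpotence.

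First, for each prime~$p$ the hypothesis gives a nilpotent group endomorphism~$\theta_p\colon G_p\to G_p$ that realizes the $p$-sequence~$\lfloor a\rfloor_p$. Applying the first part of Lemma~\ref{lemmaalgebraicreduction} and its proof I would replace~$G_p$ by~$\bigcup_{n\ge1}\fixset_n(\theta_p)$, which is a countable locally finite group on which~$\theta_p$ acts as an automorphism and continues to realize~$\lfloor a\rfloor_p$. At this point Lemma~\ref{lemmaLocallyNilpotent1} applies and guarantees that each~$G_p$ is a locally finite~$p$-group.

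Next I would form the Cartesian product~$G=\prod_{p\in\PP}G_p$ together with the product endomorphism~$\theta=\prod_{p\in\PP}\theta_p$ acting coordinatewise. Then~$\fixset_n(\theta)=\prod_{p\in\PP}\fixset_n(\theta_p)$ for every~$n\ge1$. The key observation is that~$a_n$ is a positive integer and therefore~$\lfloor a_n\rfloor_p=\fix_n(\theta_p)=1$ for all but finitely many primes~$p$, so this product is finite with cardinality
\[
\fix_n(\theta)=\prod_{p\in\PP}\lfloor a_n\rfloor_p=a_n.
\]
In particular~$\fixset_n(\theta)$ is finite for every~$n\ge1$, so~$\theta$ is a legitimate dynamical system on~$G$ realizing~$a$ algebraically.

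Finally I would restrict to~$X=\bigcup_{n\ge1}\fixset_n(\theta)<G$. The argument in the proof of Lemma~\ref{lemmaalgebraicreduction} shows that~$X$ is a~$\theta$-invariant subgroup, that~$\theta\vert_X$ is an automorphism of~$X$ realizing~$a$, and that~$X$ is locally finite. Since each~$G_p$ is a locally finite~$p$-group, Lemma~\ref{lemmaLocallyNilpotent2} applied to~$X<\prod_{p\in\PP}G_p$ then shows that~$X$ is locally nilpotent, completing the construction. The one step requiring attention (and the only place the hypothesis of a~$p$-sequence was really used) is the finiteness of the product~$\prod_p\fixset_n(\theta_p)$; everything else is packaged into the two preceding lemmas.
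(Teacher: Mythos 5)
Your proof is correct and follows essentially the same route as the paper's: pass to $X_p=\bigcup_{n\ge1}\fixset_n(\theta_p)$, identify it as a locally finite $p$-group via Lemma~\ref{lemmaLocallyNilpotent1}, form the Cartesian product and restrict to $X=\bigcup_{n\ge1}\fixset_n(\theta)$, and then apply Lemma~\ref{lemmaLocallyNilpotent2}. You make explicit the finiteness of $\prod_p\fixset_n(\theta_p)$, a point the paper leaves implicit, but the construction and the sequence of lemmas invoked are the same.
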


\begin{proof}
The assumption means that at each~$p\in\PP$
there is a locally nilpotent group~$X_p$
with an endomorphism~$\theta_p\colon X_p\to X_p$
so that~$\fix_n(\theta_p)=\lfloor a_n\rfloor_p$ for
all~$n\ge1$.
As in the proof of Lemma~\ref{lemmaalgebraicreduction}
we may assume without loss of generality
that~$X_p=\bigcup_{n\ge1}\fixset_n(\theta_p)$.
Since~$\theta_p$ realizes a~$p$-sequence,~$X_p$
is a locally finite~$p$-group by Lemma~\ref{lemmaLocallyNilpotent1}.
The original sequence~$a$ is then realized by
the product~$\theta=\prod_{p\in\PP}\theta_p$
on~$G=\prod_{p\in\PP}X_p$.
By Lemma~\ref{lemmaalgebraicreduction}
we have that
\[
X=\bigcup_{n\ge1}\fixset_n(\theta)
\]
is a locally finite subgroup of~$G$
so Lemma~\ref{lemmaLocallyNilpotent2}
implies that~$X$ is locally nilpotent
and so~$a$ is nilpotently realizable
by Lemma~\ref{lemmaalgebraicreduction}.
\end{proof}

The reverse direction of Theorem~\ref{theoremNilpotent}
requires some further group-theoretic ideas.

\begin{lemma}\label{lemmaSylow1}
If a group~$G$ has a unique Sylow~$p$-subgroup~$P$
then any subgroup~$H\le G$ has a unique
Sylow~$p$-subgroup and it is~$P\cap H$.
\end{lemma}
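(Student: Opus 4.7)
The plan is to reduce everything to the key observation that, under the hypothesis of uniqueness, every $p$-subgroup of~$G$ must lie inside~$P$, after which the claim for subgroups of~$H$ follows immediately.

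First I would record the standing fact that any $p$-subgroup~$Q\le G$ is contained in some Sylow $p$-subgroup of~$G$. In the finite case this is part of Sylow's theorem; in the locally finite setting (which is the relevant one for the applications in this paper by Lemma~\ref{lemmaalgebraicreduction}) one defines Sylow $p$-subgroups as maximal $p$-subgroups, and their existence above any given $p$-subgroup is an immediate application of Zorn's lemma (the union of a chain of $p$-subgroups is a $p$-subgroup because every finite subset lies in some member of the chain). Either way, since by hypothesis~$P$ is the \emph{only} Sylow $p$-subgroup of~$G$, every $p$-subgroup of~$G$ is contained in~$P$.

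Now let~$H\le G$ be arbitrary. The intersection~$P\cap H$ is a subgroup of~$P$, hence is a $p$-subgroup of~$H$. To see it is the unique Sylow $p$-subgroup of~$H$, let~$Q$ be any $p$-subgroup of~$H$. Then~$Q$ is in particular a $p$-subgroup of~$G$, so by the preceding paragraph~$Q\le P$, and therefore~$Q\le P\cap H$. Thus~$P\cap H$ contains every $p$-subgroup of~$H$, which shows both that it is a maximal $p$-subgroup of~$H$ (hence a Sylow $p$-subgroup) and that it is the only one.

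I do not anticipate a genuine obstacle here; the entire content of the lemma is the passage from \emph{uniqueness of the Sylow $p$-subgroup in~$G$} to \emph{every $p$-subgroup of~$G$ lies in~$P$}, and this is a one-line consequence of the embedding-into-a-Sylow statement. The only point worth stating carefully is the choice of convention for Sylow $p$-subgroups in the (possibly infinite) locally finite setting, so that the embedding argument is available; once that is in place the lemma follows with no further work.
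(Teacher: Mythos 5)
Your proof is correct, and it establishes exactly the same intermediate claim as the paper's argument — namely that every $p$-subgroup of~$G$ is contained in~$P$, from which the lemma follows at once — but it reaches that claim by a different mechanism. The paper argues via normality: uniqueness forces~$P\trianglelefteq G$, so for any $p$-subgroup~$K$ the product~$PK$ is a subgroup, one checks it is a $p$-group, and maximality then forces~$PK=P$, i.e.~$K\le P$. You instead invoke the more primitive Sylow fact that every $p$-subgroup lies in \emph{some} maximal $p$-subgroup (Sylow's second theorem in the finite case, Zorn's lemma on chains of $p$-subgroups in general), so uniqueness of~$P$ directly gives~$K\le P$. Both are standard and both work in the locally finite setting the paper needs; yours is arguably a little more economical since it sidesteps checking that~$PK$ is a $p$-group and never mentions normality, while the paper's version makes visible the normal-subgroup structure that the uniqueness hypothesis encodes. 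Your explicit remark about the convention (Sylow $p$-subgroup meaning maximal $p$-subgroup) in the infinite case is a useful clarification that the paper leaves implicit when it appeals to ``maximality of Sylow subgroups.''
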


\begin{proof}
We may assume that~$G$ is not a~$p$-group.
If~$K\le G$ is a~$p$-subgroup then~$P\le PK\le G$
since~$P$ is normal in~$G$. Now~$PK$ is
a~$p$-subgroup so by maximality of
Sylow~$p$-subgroups and our assumption that~$G$ is
not a~$p$-group, we have~$PK=P$ and so~$K\le P$.
Any Sylow~$p$-subgroup of~$H\le G$ is
contained in~$P$ by this argument,
so~$H\cap P$ is the unique Sylow~$p$-subgroup
of~$H$.
\end{proof}

\begin{lemma}\label{lemmaSylowGivesLocallyNilpotent}
If~$a$ is algebraically realized by an
endomorphism on a group with
a unique Sylow~$p$-subgroup for some~$p\in\PP$
then~$a$ is locally nilpotently realizable at~$p$.
\end{lemma}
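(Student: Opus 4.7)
The plan is to show that the restriction of $\theta$ to the unique Sylow $p$-subgroup of $G$ is a locally nilpotent endomorphism realizing the $p$-part sequence $\lfloor a\rfloor_p$.

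First I would apply Lemma~\ref{lemmaalgebraicreduction} to replace~$G$ by~$X=\bigcup_{n\ge1}\fixset_n(\theta)$, so that without loss of generality~$\theta$ is an automorphism of a countable locally finite group. Before pushing~$\theta$ around, I need to check that this reduction preserves the hypothesis: if~$P$ is the unique Sylow~$p$-subgroup of the original group, then Lemma~\ref{lemmaSylow1} applied to~$X\le G$ shows that~$P\cap X$ is the unique Sylow~$p$-subgroup of~$X$, so after relabelling I may simply assume that~$\theta\colon G\to G$ is an automorphism of a countable locally finite group with unique Sylow~$p$-subgroup~$P$.

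Next I would observe that~$P$ is characteristic in~$G$: any automorphism must send the Sylow~$p$-subgroup to a Sylow~$p$-subgroup, and there is only one. Hence~$\theta(P)=P$ and the restriction~$\theta\vert_P\colon P\to P$ is a well-defined automorphism. Because~$G$ is locally finite and~$P$ consists of elements of~$p$-power order, $P$ is a locally finite~$p$-group; every finitely generated subgroup is thus a finite~$p$-group and hence nilpotent, so~$P$ is locally nilpotent.

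The last step is the fixed-point count. For each~$n\ge1$ the set~$\fixset_n(\theta\vert_P)$ equals~$P\cap\fixset_n(\theta)$. Applying Lemma~\ref{lemmaSylow1} now to the \emph{finite} subgroup~$\fixset_n(\theta)\le G$ shows that~$P\cap\fixset_n(\theta)$ is the unique Sylow~$p$-subgroup of~$\fixset_n(\theta)$, so its cardinality is exactly the~$p$-part of the order of~$\fixset_n(\theta)$. That is,
\[
\fix_n(\theta\vert_P)=\bigl\lvert P\cap\fixset_n(\theta)\bigr\rvert=\lfloor\fix_n(\theta)\rfloor_p=\lfloor a_n\rfloor_p,
\]
so~$\theta\vert_P$ is a locally nilpotent endomorphism realizing~$\lfloor a\rfloor_p$, as required.

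There is no serious obstacle; the one thing that needs care is the double use of Lemma~\ref{lemmaSylow1}—once to propagate the uniqueness of the Sylow~$p$-subgroup through the reduction to an automorphism of a locally finite group, and once to identify the intersection~$P\cap\fixset_n(\theta)$ as the Sylow~$p$-subgroup of the finite fixed-point group whose order is therefore the full~$p$-part of~$a_n$.
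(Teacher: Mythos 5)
Your proof is correct and follows essentially the same route as the paper: reduce via Lemma~\ref{lemmaalgebraicreduction} to an automorphism of a locally finite group, restrict to the unique Sylow~$p$-subgroup~$P$ (noting it is locally nilpotent), and use Lemma~\ref{lemmaSylow1} to identify~$\fix_n(\theta\vert_P)$ with~$\lfloor a_n\rfloor_p$. The only cosmetic differences are that you apply Lemma~\ref{lemmaSylow1} directly to~$\fixset_n(\theta)$ to read off the order of its Sylow~$p$-subgroup, where the paper argues by contradiction, and that you spell out two points the paper leaves implicit (that uniqueness of the Sylow~$p$-subgroup passes to~$X$ under the reduction, and that~$P$ is characteristic so~$\theta\vert_P$ is well-defined).
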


\begin{proof}
By Lemma~\ref{lemmaalgebraicreduction} we can
assume that~$a$ is realized by an automorphism~$\theta$
on the locally finite group~$X=\bigcup_{n\ge1}\fixset_n(\theta)$.
Since~$X$ is locally finite, any finitely generated
subgroup of its unique Sylow~$p$-subgroup~$P$
is a finite~$p$-group and so is nilpotent
and hence~$P$ is locally nilpotent.
We claim that~$\theta\vert_{P}$ realizes
the~$p$-part of~$a$.
Clearly~$\fixset_n(\theta\vert_P)\le\fixset_n(\theta)$
and so~$\fix_n(\theta\vert_P)\divides\fix_n(\theta)$
and hence~$\fix_n(\theta\vert_P)\divides\lfloor a_n\rfloor_p$
since~$\fix_n(\theta\vert_P)$
is a finite subgroup of the~$p$-group~$P$.
If there is an~$n\ge1$ with~$\fix_n(\theta\vert_P)\neq\lfloor a_n\rfloor_p$
then~$p\divides\frac{\lfloor a_n\rfloor_p}{\fix_n(\theta\vert_P)}$
and so~$p\divides[\fixset_n(\theta):\fixset_n(\theta\vert_P)]$
so~$\fixset_n(\theta\vert_P)$ is not a Sylow~$p$-subgroup
of~$\fixset_n(\theta)$ and we can use Lemma~\ref{lemmaSylow1}
to deduce that~$\fixset_n(\theta\vert_P)\neq P\cap\fixset_n(\theta)$.
This is not possible as~$\theta\vert_P$ is
simply a restriction of~$\theta$.
It follows that~$\fix_n(\theta\vert_P)=\lfloor a_n\rfloor_p$
for all~$n\ge1$ and~$\theta\vert_{P}$ locally nilpotently
realizes the~$p$-part of~$a$.
\end{proof}

\begin{lemma}\label{lemmaSylowTrick2}
If~$G$ is a locally finite, locally nilpotent group
then for any~$p\in\PP$ the group~$G$ has a unique
Sylow~$p$-subgroup comprising all elements of order
a power of~$p$.
\end{lemma}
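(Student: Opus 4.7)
The plan is to exhibit the candidate Sylow $p$-subgroup explicitly as the set
\[
P=\{x\in G\mid o(x)\text{ is a power of }p\},
\]
and then check in turn that $P$ is a subgroup, that it is a $p$-subgroup, and that every $p$-subgroup of $G$ is contained in $P$ (which immediately gives uniqueness and maximality).

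First I would show $P$ is closed under the group operations. Given $x,y\in P$, the subgroup $\langle x,y\rangle$ is finitely generated, hence finite by local finiteness and nilpotent by local nilpotence. Since a finite nilpotent group is the internal direct product of its Sylow subgroups, the elements of $p$-power order in $\langle x,y\rangle$ form a subgroup (its unique Sylow $p$-subgroup), so $xy^{-1}\in P$. This is the one step that uses the hypothesis in an essential way.

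Next, $P$ is itself a $p$-group: any finitely generated subgroup of $P$ is finite by local finiteness, and every element has $p$-power order, so it is a finite $p$-group. Finally, if $Q\le G$ is any $p$-subgroup, then every $x\in Q$ has $p$-power order, so $Q\subseteq P$. This shows $P$ is maximal among $p$-subgroups of $G$, hence a Sylow $p$-subgroup; and any other Sylow $p$-subgroup, being a $p$-subgroup, is contained in $P$ and equals $P$ by its own maximality. Thus $P$ is the unique Sylow $p$-subgroup of $G$ and, by construction, consists of precisely the elements of $p$-power order.

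The main obstacle is really only the subgroup step, and even that reduces to the standard structural fact about finite nilpotent groups. Everything else is a bookkeeping exercise in the definitions of locally finite and locally nilpotent, so I would not expect any surprises.
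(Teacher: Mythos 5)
Your proof is correct and uses essentially the same mechanism as the paper's: reduce to a finitely generated subgroup, which is finite and nilpotent by the two local hypotheses, and invoke the unique Sylow $p$-subgroup of a finite nilpotent group to get closure. The paper phrases the candidate as the union of all finite $p$-subgroups rather than as the set of elements of $p$-power order, but these coincide under local finiteness, and your version spells out the $p$-group and maximality verifications a little more explicitly, so no substantive difference.
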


\begin{proof}
Let~$\Omega$ index the set of finite~$p$-subgroups of~$G$
in the sense that
\[
\mathcal{P}
=
\{P_\omega\mid P_{\omega}\text{ is a finite~$p$-subgroup of $G$}\}
=
\{P_\omega\mid\omega\in\Omega\}
\]
and let~$H=\bigcup_{\omega\in\Omega}P_\omega$.
If~$x,y\in H$ there are~$\omega_1,\omega_2\in\Omega$
with~$x\in P_{\omega_1}$
and~$y\in P_{\omega_2}$ and so~$x^{-1}\in P_{\omega_1}$
and hence~$x^{-1}\in H$.
If~$x$ or~$y$ is the identity then~$xy\in H$.
If not then~$o(x)=p^r$ and~$o(y)=p^s$
for some~$r,s>0$.
It follows that~$K=\langle x,y\rangle$
is a finite nilpotent subgroup of~$G$ with
a unique Sylow~$p$-subgroup~$L$.
We must have~$x,y\in L$ and hence~$K=L$.
Since~$L\in\mathcal{P}$ it follows that~$xy\in H$
and so~$H$ is a subgroup of~$G$.
If~$x\in G$ has order a power of~$p$ then~$\langle x\rangle\in\mathcal{P}$
so for any endomorphism~$\theta$ of~$G$ we have~$\theta(x)\in H$,
completing the proof.
\end{proof}

We are now in a position to prove Theorem~\ref{theoremNilpotent}.
Assume that~$a$ is a sequence locally nilpotently
realized by~$\theta\colon X\to X$.
By Lemma~\ref{lemmaalgebraicreduction} we may
assume that~$X$ is locally finite and~$\theta$ is
an automorphism.
For~$p\in\PP$ Lemma~\ref{lemmaSylowTrick2} gives
a unique Sylow~$p$-subgroup and so~$a$
is locally nilpotently realized at~$p$
by Lemma~\ref{lemmaSylowGivesLocallyNilpotent}.
This holds for all primes~$p$ and so
with Proposition~\ref{propositionLocalToGlobalForNilpotent}
completes the proof of Theorem~\ref{theoremNilpotent}.

Theorem~\ref{theoremNilpotent} gives a ready
supply of realizable~$p$-sequences.
To see how this looks in an abelian setting,
we describe the periodic points of a specific
automorphism of the~$3$-torus~$\TT^3=\RR^3/\ZZ^3$ at various primes.

\begin{example}\label{exampletoral}
Let~$\theta\colon\TT^3\to\TT^3$ be the automorphism
defined by
\[
\theta\begin{pmatrix}
x\\
y\\
z
\end{pmatrix}
=
\begin{pmatrix}
0&1&0\\
0&0&1\\
1&1&0
\end{pmatrix}
\begin{pmatrix}
x\\
y\\
z
\end{pmatrix}
=
\begin{pmatrix}
y\\
z\\
x+y
\end{pmatrix}.
\]
The resulting sequence~$a=(\fix_n(\theta))$ is
the `Lehmer--Pierce' sequence associated to
the characteristic polynomial~$x^3-x-1$
(see~\cite{MR1700272} for more on these).
A calculation shows that
\[
a
=
(1,1,1,5,1,7,8,5,19,11,23,35,27,64,61,85,137,\dots)
=
\seqnum{A001945}
\]
is a linear
recurrence sequence satisfying
\[
a_{n+6}=-a_{n+5}+a_{n+4}+3a_{n+3}+a_{n+2}-a_{n+1}-a_n
\]
for~$n\ge1$.
It is an example of a `slowly growing' linear
recurrence in the sense of Lehmer~\cite{MR1503118}
and for that reason Hall~\cite{MR1574113} factorized the first~$100$
terms. Theorem~\ref{theoremNilpotent} implies
that each of the sequences~$\lfloor a\rfloor_p$
for~$p\in\PP$ is algebraically realizable.
This `local' structure is rather intricate.
At the prime~$2$ we have
\[
\lfloor a_n\rfloor_2
=
\begin{cases}
2^{3(1+\ord_2(n))}&\text{if }7\divides n,\\
1&\text{if }7\notdivides n.
\end{cases}
\]
At the prime~$3$ we have
\[
\lfloor a_n\rfloor_3
=
\begin{cases}
3^{3(1+\ord_3(n))}&\text{if }13\divides n,\\
1&\text{if }13\notdivides n.
\end{cases}
\]
At the prime~$5$ the structure is more
involved, and we have~$\lfloor a_n\rfloor_5=b_nc_n$
where
\begin{align*}
b_n
&=
\begin{cases}
5^{1+\ord_5(n)}&\text{if }4\divides n,\\
1&\text{if }4\notdivides n
\end{cases}
\intertext{and}
c_n
&=
\begin{cases}
5^{2(1+\ord_5(n))}&\text{if }24\divides n,\\
1&\text{if }24\notdivides n.
\end{cases}
\end{align*}
\end{example}

The~$p$-parts of algebraically realizable sequences
have been (implicitly) studied from the point of view
of their rate of growth (rather than their
arithmetic) in a different setting.
The~$S$-integer dynamical systems introduced
by Chothi {\it{et al.}}~\cite{MR1461206}
may be thought of in zero characteristic as
generalizations of the following set-up: Given a set~$S\subset\PP$
write~$\lfloor a\rfloor_S=\prod_{p\in S}p^{\ord_p(a_)}$
for the~$S$-part of~$a$. Then given a starting system~$\theta\colon\TT^3\to\TT^3$
like Example~\ref{exampletoral} it constructs for each~$S\subset\PP$
an algebraic dynamical system~$\theta_S\colon X\to X$
with~$\fix_n(\theta_S)=\fix_n(\theta)/\lfloor\fix_n(\theta)\rfloor_S$
for all~$n\ge1$.
Various growth rate questions for these systems
have been studied by Everest {\it{et al.}} for~$S$ finite~\cite{MR2550149}
and for~$\PP\setminus S$ finite~\cite{MR2339472}.
Analytic properties including a
conjectured P\'{o}lya--Carslon dichotomy for associated
generating functions have been studied by
Bell {\it{et al.}}~\cite{MR3217030,MR4586814}
and
Baril Boudreau {\it{et al.}}~\cite{zbMATH08045547}.

\section{Bernoulli and Euler Numbers}

Recall that the Euler numbers~$E=(E_n)$ may be defined
by
\[
\frac{2}{{\rm{e}}^t+{\rm{e}}^{-t}}=\sum_{n=0}^{\infty}E_n\frac{t^n}{n!}
\]
so~$E_n=0$ for odd~$n$. We define a non-negative
integer sequence~$e$ by
setting~$e_n$ to be~$(-1)^nE_{2n}$ for~$n\ge1$,
giving
\begin{equation}\label{equationShiftingEulerSequence}
e=(1,5,61,1385,\dots)
=
\seqnum{A000364}.
\end{equation}

The Bernoulli numbers~$B=(B_n)$ may
be defined by
\[
\frac{t}{{\rm{e}}^t-1}=\sum_{n=0}^{\infty}B_n\frac{t^n}{n!}
\]
and we write
\[
\left\vert\frac{B_{2n}}{2n}\right\vert
=
\frac{t_n}{b_n}
\]
for~$n\ge1$ with~$\gcd(t_n,b_n)=1$ for all~$n\ge1$.
It is well-known that~$b_n$ is even and~$t_n$ is odd for all~$n\ge1$.
This defines non-negative integer
sequences~$b$ and~$t$ as follows:
\begin{align*}
b&=(12,120,252,240,\dots)=\seqnum{A006953},\\
\intertext{and}
t&=(1,1,1,1,1,691,\dots)=\vert\seqnum{A001067}\vert\text{ (that is, equality up to sign)}.
\end{align*}

\begin{definition}[Kummer's Bernoulli regular primes]\label{definitionBernoulliRegularPrimes}
A prime~$q$ is said to be \emph{(Bernoulli) regular}
if it does not divide any of~$t_1,\dots,t_{(q-3)/2}$
and \emph{(Bernoulli) irregular} if it is not regular.
\end{definition}

Thus the sequence of regular primes begins
\[
(3, 5, 7, 11, 13, 17, 19, 23,\dots)=\seqnum{A007703}
\]
and the sequence of irregular primes begins
\[
(37, 59, 67, 101, 103, 131, 149,\dots)=\seqnum{A000928}.
\]
It is well-known that there are infinitely many
irregular primes, but only conjectured that there
are infinitely many regular primes. For simplicity
we will also call the anomalous prime~$2$ regular.

Several authors have proved that
various of these sequences
defined above, and many
generalizations of them, are realizable (see Zhi-Hong Sun~\cite{zbMATH06015648},
Juan Arias de Reyna~\cite{zbMATH02203410},
Patrick Moss~\cite{pm}).
We are however interested in the extent to which these
global results translate into local realizability
at primes.

\begin{theorem}\label{theoremBTEresult}
The Bernoulli and Euler sequences have the following properties:\newline
\noindent{\rm(a)} The Bernoulli denominator sequence~$b$ is
algebraically realizable at every prime.\newline
\noindent{\rm(b)} The Bernoulli numerator sequence~$t$ is realizable and is
realizable at a prime~$q$ if and only if~$q$ is a regular prime.\newline
\noindent{\rm(c)} The Euler sequence~$e$ is realizable but is not
realizable at all primes.
\end{theorem}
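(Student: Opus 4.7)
For part~(a), the plan is to give an explicit formula for the~$p$-part of~$b$ and realize each case directly. Combining the von Staudt--Clausen theorem with the classical $p$-integrality of~$B_{2n}/(2n)$ when~$(p-1)\nmid 2n$ (a consequence of Kummer's congruence) yields
\[
\lfloor b_n\rfloor_p
=
\begin{cases}
p^{1+\ord_p(2n)}&\text{if $(p-1)\divides 2n$,}\\
1&\text{otherwise.}
\end{cases}
\]
Each case I would realize by multiplication on the Pr\"ufer group~$\QQ_p/\ZZ_p$, which has~$p^{\ord_p(\alpha^n-1)}$ fixed points under multiplication by~$\alpha^n$. For~$p$ odd I take~$\alpha=(1+p)\omega\in\ZZ_p^{\times}$, where~$\omega$ is the Teichm\"uller lift of a generator of the squares in~$\FF_p^{\times}$; the lifting-the-exponent lemma then gives~$\ord_p(\alpha^n-1)=1+\ord_p(n)$ when~$(p-1)/2\divides n$ and~$0$ otherwise. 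For~$p=2$, multiplication by~$-3$ delivers~$\ord_2((-3)^n-1)=2+\ord_2(n)$ for all~$n\ge1$ via the mod-$2$ version of the same lemma.

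For part~(b), the global realizability of~$t$ is in the cited literature, so the content is the local dichotomy. This is driven by Kummer's congruence
\[
\frac{B_{2n}}{2n}\equiv\frac{B_{2n'}}{2n'}\pmod{q^k}
\qquad\text{for }2n\equiv 2n'\pmod{q^{k-1}(q-1)}\text{ and }(q-1)\nmid 2n.
\]
If~$q$ is regular, applying this at~$k=1$ across residue classes modulo~$q-1$ propagates~$q\nmid t_j$ from~$1\le j\le(q-3)/2$ to every~$n$ with~$(q-1)\nmid 2n$; for the remaining~$n$ the prime~$q$ divides~$b_n$, so~$q\nmid t_n$ by coprimality. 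Hence~$\lfloor t_n\rfloor_q=1$ for all~$n$, which is trivially realizable. Conversely, for~$q$ irregular I fix the least~$j\le(q-3)/2$ with~$q\divides t_j$ and use the depth-$m$ Kummer congruence to compare~$\ord_q(t_{jq^{m-1}})$ with~$\ord_q(t_{jq^m})$, showing they differ by too little to satisfy the Arias de~Reyna criterion for~$\lfloor t\rfloor_q$ at the pair~$(p,m)=(q,m)$.

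Part~(c) follows the same template: global realizability of~$e$ is in the literature, and the Euler--Kummer congruence~$E_{2n}\equiv E_{2n'}\pmod{q^k}$ for~$2n\equiv 2n'\pmod{q^{k-1}(q-1)}$ supports the analogous dichotomy. It therefore suffices to exhibit a single \emph{Euler irregular} prime~$q$, meaning one dividing~$E_{2j}$ for some~$1\le j\le(q-3)/2$. Such primes can be located directly from the initial terms of~$e$ (the prime~$19$ already works, since~$19\divides E_{10}=-50521$), and the argument from~(b) then produces the required failure of realizability at~$q$.

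The main obstacle will be the irregular case of~(b), with its Euler analogue in~(c): turning the isolated combinatorial fact that~$q$ divides some~$t_j$ into a concrete violation of the Dold congruence for~$\lfloor t\rfloor_q$. Mere occasional divisibility is not enough; one must locate an arithmetic progression along which the~$q$-adic valuation of~$t$ jumps by strictly less than the Arias de~Reyna criterion can absorb modulo~$p^m$. This requires a careful application of Kummer's congruence at depth~$m$ together with attention both to the excluded residues~$(q-1)\divides 2n$ and to the behaviour at auxiliary primes~$p\ne q$ entering the criterion.
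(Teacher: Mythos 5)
Your part~(a) is correct and follows essentially the paper's route: you recover the same formula for~$\lfloor b_n\rfloor_p$ from von Staudt--Clausen together with Adams' theorem, and realize each~$p$-part by multiplication on the~$p$-Pr\"ufer group (the paper's~$\TT_p$). Your multiplier~$(1+p)\omega$ plays the same role as the paper's squared primitive root acting on~$\TT_p$; the matrix construction in the paper is only there to handle the more general family~$\ell^{(k,m,p)}$ with~$m>1$, which~$b$ itself never needs. The regular-prime half of part~(b) is likewise the paper's argument.

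The gap is in the irregular half of~(b), and it carries over to~(c). You propose to exhibit a Dold-congruence failure for~$\lfloor t\rfloor_q$ at the prime~$p=q$ by comparing~$\ord_q(t_{jq^{m-1}})$ with~$\ord_q(t_{jq^m})$ via Kummer at depth~$m$. But Kummer's congruence works \emph{in favour} of the Dold condition along exactly that chain: since~$(q-1)\notdivides 2j$ (automatic when~$j\le(q-3)/2$), one gets~$t_{jq^m}\equiv t_{jq^{m-1}}\pmod{q^m}$ after clearing the unit denominators, and this forces~$\lfloor t_{jq^m}\rfloor_q\equiv\lfloor t_{jq^{m-1}}\rfloor_q\pmod{q^m}$ too: if the common $q$-adic valuation is~$v<m$ both sides equal~$q^v$, and otherwise both are~$\equiv0\pmod{q^m}$. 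So the Arias de Reyna criterion for~$\lfloor t\rfloor_q$ holds at~$p=q$, and your plan cannot produce a counterexample there. What actually fails is the sign condition, through the elementary fact (which your proposal never invokes) that any realizable sequence~$a$ satisfies~$a_k\le a_m$ whenever~$k\divides m$, because~$\fixset_k(T)\subseteq\fixset_m(T)$. For~$q$ Bernoulli irregular, choose~$k\le(q-3)/2$ with~$q\divides t_k$ and set~$m=k(q-1)/2$; then~$k\divides m$, and~$(q-1)\divides 2m$ gives~$q\divides b_m$ by von Staudt--Clausen, hence~$q\notdivides t_m$, so~$\lfloor t_k\rfloor_q\ge q>1=\lfloor t_m\rfloor_q$ breaks divisor-chain monotonicity. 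The same observation finishes~(c) with a single numerical witness: $61\divides e_3$ while~$61\notdivides e_9$ and~$3\divides 9$. Your remark that~$19\divides E_{10}$ correctly shows~$19$ is Euler irregular, but ``the argument from~(b)'' you refer back to is the Kummer-depth comparison, and it does not close the argument.
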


\section{Bernoulli Denominators}

We start by identifying a general family of sequences with
controlled algebraic realizability properties.
In order to do this we will need to be able to
construct integer matrices with prescribed properties as
follows.

\begin{lemma}\label{lemmaPatMatrixTrick}
Let~$q=p^m$ with~$m\in\NN$ and~$p\in\PP$.
Then there exists an~$m\times m$ integer matrix~$A\in M_{m,m}(\ZZ)$
satisfying
\begin{enumerate}
\item[\rm(1)] $\det(A^n-I)\not\equiv0$ modulo~$p$ if~$q-1\notdivides n$ and
\item[\rm(2)] $A^{q-1}=I+pB$, where~$B\in M_{m,m}(\ZZ)$
and~$\det(B)\neq0$ modulo~$p$.
\end{enumerate}
\end{lemma}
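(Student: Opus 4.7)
The plan is to take $A$ to be the companion matrix of a carefully chosen monic polynomial $f(x)\in\ZZ[x]$ of degree~$m$. Condition~(1) will be arranged through the choice of the reduction $\bar f\in\FF_p[x]$, and condition~(2) through a refinement of $f$ modulo~$p^2$.

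For~(1), I choose $\bar f$ to be the minimal polynomial over $\FF_p$ of a generator of the cyclic group $\FF_q^*$; such a $\bar f$ is monic, irreducible, and of degree~$m$. The eigenvalues of $A\bmod p$ in $\FF_q$ then form a Galois orbit of primitive $(q-1)$-th roots of unity, and since these orders are coprime to~$p$, none of them is an $n$-th root of unity unless $q-1\divides n$. Expanding $\det(A^n-I)$ as the product of $\bar\alpha^n-1$ over the eigenvalues $\bar\alpha$ then shows this determinant is non-zero modulo~$p$ precisely when $q-1\notdivides n$.

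For~(2), let $\alpha_1,\dots,\alpha_m$ be the roots of $f$ in the unramified extension $\QQ_{p^m}$ of $\QQ_p$, let $\zeta_i$ be the Teichm\"uller lift of $\alpha_i$ modulo the maximal ideal $\mathfrak p$, and let $\Phi\in\ZZ_p[x]$ be the minimal polynomial of $\zeta_1$, so $\Phi\equiv\bar f\pmod p$ and $\zeta_i^{q-1}=1$ exactly. I refine the choice of $f$ so that $f\not\equiv\Phi\pmod{p^2}$, which is possible since among the $p^m$ monic integer lifts of $\bar f$ modulo $p^2$ exactly one equals $\Phi$ modulo $p^2$. Writing $\alpha_i=\zeta_i+pu_i$, a first-order Taylor computation (using that $\bar f'(\bar\zeta_i)\neq 0$ because $\bar f$ is separable) translates the condition $f\not\equiv\Phi\pmod{p^2}$ into $u_i$ being a unit in the ring of integers of $\QQ_{p^m}$. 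Then
\[
\alpha_i^{q-1}=(\zeta_i+pu_i)^{q-1}=1+p(q-1)\zeta_i^{-1}u_i+O(p^2),
\]
so taking the product over $i$ gives $\det(A^{q-1}-I)=\prod_i(\alpha_i^{q-1}-1)=p^m v$ with $v\in\ZZ_p^*$. Since this determinant also equals $p^m\det(B)$, we conclude $\det(B)\not\equiv 0\pmod p$.

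The main obstacle is the delicate part of~(2): while $A^{q-1}\equiv I\pmod p$ is automatic from the order of $A\bmod p$ secured in~(1), arranging that $A^{q-1}\not\equiv I\pmod{p^2}$ requires the specific second-order refinement against the Teichm\"uller polynomial~$\Phi$. Since $\Phi$ lives in $\ZZ_p[x]$ rather than $\ZZ[x]$, the strategy is not to exhibit $\Phi$ explicitly but to use it abstractly to identify the unique bad lift modulo~$p^2$ that must be avoided.
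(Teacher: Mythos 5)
Your proof is correct, and it takes a genuinely different route from the paper's. The paper constructs the matrix in the same first step you do (multiplication by a generator of $\FF_q^\times$, lifted to $\ZZ$), but then handles the potential failure of condition~(2) by an explicit and entirely elementary perturbation: it replaces $A$ by $A'=A+p(I+AB)$ and carries out a direct binomial computation in $M_{m,m}(\ZZ)$ to show that $(A')^{q-1}=I+pB'$ with $B'\equiv -A^{q-2}\pmod p$, so $\det(B')\not\equiv 0$ automatically because $\det A\not\equiv 0$. Your argument instead moves to $\ZZ_p$ and the unramified extension $\QQ_{p^m}$, identifies the Teichm\"uller polynomial $\Phi$ as the unique lift of $\bar f$ modulo $p^2$ that must be avoided, and computes the $p$-adic valuation of $\det(A^{q-1}-I)=\prod_i(\alpha_i^{q-1}-1)$ from the first-order expansion $\alpha_i^{q-1}\equiv 1-p\zeta_i^{-1}u_i\pmod{p^2}$. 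The one place you leave a small step implicit: when you pass from $f\not\equiv\Phi\pmod{p^2}$, i.e.\ $\bar g\neq 0$ where $f=\Phi+pg$, to the assertion that every $u_i\equiv -g(\zeta_i)/\Phi'(\zeta_i)$ is a unit, you need that $\bar g(\bar\zeta_i)\neq 0$ for all $i$; this follows because $\bar\zeta_i$ has degree $m$ over $\FF_p$ while $\deg\bar g<m$, so a zero would force $\bar g=0$. With that observation supplied, your argument is complete. The trade-off is clear: the paper's approach is self-contained and purely over $\ZZ$, whereas yours invokes more machinery (Teichm\"uller lifts, unramified valuation theory) but is more conceptual, explaining that a ``generic'' lift already works and pinpointing the single bad lift to avoid.
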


\begin{proof}
As usual we write~$\FF_q$ for the field of~$q$ elements
with its prime field~$\FF_p$.
The multiplicative group~$\FF_q^{\times}=(\FF_q\setminus\{0\},\times)$
is cyclic, so contains an element~$g\in\FF_q^{\times}$
of order~$q-1$. Choosing a basis for~$\FF_q$ viewed
as a vector space over the ground field~$\FF_p$ and expressing
the map~$x\mapsto gx$ on~$\FF_q$ gives a matrix~$A\in M_{m,m}(\FF_p)$
such that~$\det(A^n-I)\neq0$ if~$q-1\notdivides n$
and~$A^{q-1}=I$.
By using the modular representatives~$\{0,1,\dots,p-1\}\subset\ZZ$
we can think of~$A$ as an element of~$M_{m,m}(\ZZ)$.
When~$A$ is viewed as an integer matrix we have~$A^{q-1}=I+pB$
for some matrix~$B\in M_{m,m}(\ZZ)$, and we claim that it
is possible to choose~$A$ so as to ensure that~$p\notdivides\det(B)$.
By multiplying on the left and on the right we have
\[
A+pAB=A^q=A+pBA
\]
so~$A$ and~$B$ commute in~$M_{m,m}(\ZZ)$
and hence~$A(I+AB)=(I+AB)A$.
Let~$A'=A+p(I+AB)$, so~$A'\equiv A$ modulo~$p$
and~$A'$ satisfies~(1).
On the other hand we have
\begin{align*}
(A')^{q-1}
&=
A^{q-1}+p(q-1)A^{q-2}(I+AB)+p^2L\\
&=
I+pB+pqA^{q-2}+pq(I+pB)B-pA^{q-2}-p(I+pB)B+p^2L\\
&=
I-pA^{q-2}+p^2\bigl(L-\tfrac{q}{p}A^{q-2}+\tfrac{q}{p}A^{q-1}B-B^2\bigr)\\
&=
I-pA^{q-2}+p^2K
\end{align*}
for some matrices~$K,L\in M_{m,m}(\ZZ)$.

Let~$B'=-A^{q-2}+pK$ so that~$(A')^{q-1}=I+pB'$ and
\[
\det(B')
\equiv\det(-A^{q-2})\not\equiv0\pmod{p}.
\]
Thus replacing~$A$ with~$A'$ and~$B$ with~$B'$ gives the lemma.
\end{proof}

\begin{definition}
For~$k,m\in\NN$ and~$p\in\PP$ with~$p\notdivides k$,
define an integer sequence~$\ell^{(k,m,p)}$ by
\[
\ell_n^{(k,m,p)}
=
\left\{
\begin{array}{ll}
p^{m(1+\ord_p(n))}&\mbox{if }k\divides n\\
1&\mbox{if }k\notdivides n
\end{array}
\right.
\]
for~$n\ge1$.
\end{definition}

Thus, for example,~$\ell^{(1,1,2)}=(2, 4, 2, 8, 2, 4, 2, 16,\dots)=\seqnum{A171977}$.

\begin{lemma}\label{lemmaOddPrimeAlgRealizableCase}
If~$p$ is an odd prime then~$\ell^{(k,m,p)}$ is
algebraically realizable if and only if~$k$
divides~$p^m-1$.
\end{lemma}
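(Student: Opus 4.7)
The strategy is to split the biconditional. Necessity is a short application of the Dold congruence; sufficiency is an explicit construction built from Lemma~\ref{lemmaPatMatrixTrick}.

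For the necessity direction, algebraic realizability implies realizability, so the Dold congruence must hold. For $k>1$ we have $\ell_d^{(k,m,p)}=1$ whenever $d<k$ is a divisor of $k$, and since $\sum_{d\smalldivides k}\mu(k/d)=0$ this gives
\[
\sum_{d\smalldivides k}\mu(k/d)\,\ell_d^{(k,m,p)}
=\ell_k^{(k,m,p)}+\sum_{\substack{d\smalldivides k\\ d<k}}\mu(k/d)
=p^m-1.
\]
The Dold congruence at $n=k$ therefore forces $k\mid p^m-1$; the case $k=1$ is trivial.

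For sufficiency, write $q=p^m$ and let $A$ be the integer matrix supplied by Lemma~\ref{lemmaPatMatrixTrick}. Since $k\mid q-1$, the matrix $A'=A^{(q-1)/k}$ is well-defined, and I realize $\ell^{(k,m,p)}$ by viewing $A'$ as an automorphism of the divisible abelian $p$-group $G=(\QQ_p/\ZZ_p)^m$ (this is legitimate because the conditions on $A$ force $\det(A)$ to be a unit in $\ZZ_p$). In this setup one has $\fix_n(A')=|\det((A')^n-I)|_p^{-1}$ whenever $(A')^n-I$ is non-singular over $\QQ_p$, by Smith normal form on $\ZZ_p^m$. The verification splits into two cases. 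When $k\nmid n$, the reduction of $A'$ modulo $p$ corresponds, under the identification $\FF_p^m\cong\FF_q$ used inside the proof of Lemma~\ref{lemmaPatMatrixTrick}, to multiplication on $\FF_q$ by a primitive $k$-th root of unity; hence $(A')^n-I$ is, modulo $p$, multiplication by a non-zero element of the field $\FF_q$, which is invertible, and so $\fix_n(A')=1$. When $k\mid n$, write $n=kj$ with $\ord_p(j)=s$, so $(A')^n=(I+pB)^j$, and a $p$-adic expansion yields
\[
(I+pB)^j-I\equiv jpB\pmod{p^{s+2}};
\]
combining this with the invertibility of $B$ modulo $p$ gives $\ord_p(\det((A')^n-I))=m(s+1)$, and so $\fix_n(A')=p^{m(1+\ord_p(n))}$, as required.

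The main technical obstacle is the $p$-adic expansion in the last display. The critical binomial term is $\binom{j}{2}(pB)^2=\tfrac{j(j-1)}{2}\,p^2B^2$: for odd $p$ we have $\ord_p\bigl(\tfrac{j(j-1)}{2}\bigr)=\ord_p(j)=s$ so this term has valuation at least $s+2$, and similar (cleaner for $i\ge 3$) bounds apply to the higher binomial tails via standard $p$-adic estimates on $(I+pM_m(\ZZ_p),\times)\cong(pM_m(\ZZ_p),+)$; for $p=2$ the division by $2$ eats one unit of valuation and the estimate fails. This is the one place where the odd-prime hypothesis is essential and it is exactly what the statement of the lemma reflects. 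Everything else—M\"obius inversion, identification of $\fix_n$ on a divisible $p$-group with a $p$-adic determinantal invariant, and elementary finite-field arithmetic—is routine bookkeeping.
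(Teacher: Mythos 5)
Your argument is correct and follows essentially the same route as the paper: necessity via the Dold congruence at $n=k$, sufficiency via the matrix from Lemma~\ref{lemmaPatMatrixTrick} acting on the Pr\"ufer $p$-group (your $(\QQ_p/\ZZ_p)^m$ is the paper's $\TT_p^m$), with the identical case split on $k\divides n$ versus $k\notdivides n$ and the same binomial expansion of $(I+pB)^j$ using oddness of $p$ to keep the leading term dominant modulo $p^{s+2}$. The only real differences are presentational: you phrase the fixed-point count via $\fix_n(A')=\vert\det((A')^n-I)\vert_p^{-1}$ from Smith normal form, where the paper argues pointwise that solutions to $(I+p^{s+1}D)x=0$ satisfy $p^{s+1}x=0$, and you spell out the tail estimates on the binomial terms that the paper leaves implicit in ``$\cdots$''.
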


\begin{proof}
If~$\ell^{(k,m,p)}$ is realizable, then the number
of points living on closed orbits of length~$k$
is given by
\begin{align*}
\sum_{d\smalldivides k}\mu\bigl(\tfrac{k}{d}\bigr)\ell_d^{(k,m,p)}
&=
\ell^{(k,m,p)}_{k}+\sum_{d\smalldivides k\atop{d<k}}\mu\bigl(\tfrac{k}{d}\bigr)\underbrace{\ell_d^{(k,m,p)}}_{=1\text{ as }k\smallnotdivides d}\\
&=
\ell^{(k,m,p)}_{k}+\sum_{d\smalldivides k\atop{d<k}}\mu\bigl(\tfrac{k}{d}\bigr)\\
&=
p^{m(1+\ord_p(k))}-1=p^m-1
\end{align*}
since~$p\notdivides k$. It follows that~$k\divides p^m-1$.

For the converse direction we use the
additive torsion group~$\ZZ\bigl[\tfrac1p\bigr]/\ZZ$
which we denote by
\begin{equation}\label{equationptorsiontorus}
\TT_p
=
\bigcup_{n\ge1}
\bigl\{
\tfrac{r}{p^n}\mid r\in\{0,1,\dots,p^n-1\}
\bigr\}.
\end{equation}
By Lemma~\ref{lemmaPatMatrixTrick} there is
a matrix~$A\in M_{m,m}(\ZZ)$
with~$\det(A^n-I)\not\equiv0$ modulo~$p$
if~$p^m-1\notdivides n$
and~$A^{p^m-1}=I+pB$
with~$p\notdivides\det(B)$.
Let~$c=\frac{p^m-1}{k}$.
Writing the elements of~$\TT_p^m$ as
column vectors, define an
endomorphism~$T$
of~$\TT_p$
by setting~$Tx=A^{c}x$.
We claim that~$T\colon\TT_p\to\TT_p$ algebraically
realizes the sequence~$\ell^{(k,m,p)}$.

If~$T^nx=x$ with~$k\notdivides n$
then~$A^{cn}x=x$ and so~$\det(A^{cn}-I)\not\equiv0$
modulo~$p$ since~$p^m-1\notdivides cn$.
This implies that there is a matrix~$C\in M_{m,m}(\ZZ)$
satisfying~$(I+pC)x=0$. It follows that~$x=0$ and
hence~$\fix_n(T)=1$ whenever~$k\notdivides n$.

If~$k$ divides~$n$ then we may write~$n=p^skr$
for integers~$s\ge0$ and~$r\ge1$ with~$p\notdivides r$.
Since~$A^{p^m-1}=I+pB$ we have
\[
A^{cn}
=
A^{(p^m-1)p^sr}
=
(I+pB)^{p^sr}
=
I+p^{s+1}rB+\cdots
=
I+p^{s+1}D
\]
for some~$D\in M_{m,m}(\ZZ)$ with~$\det(D)\not\equiv0$
modulo~$p$ since~$p\notdivides\det(B)$ and~$p$
is odd.
It follows that if~$T^nx=x$ then~$p^{s+1}Dx=0$
which implies that~$p^{s+1}x=0$ since~$p\notdivides\det(D)$.
Thus in this case~$\fix_n(T)=p^{m(s+1)}=p^{m(1+\ord_p(n))}$
and hence the sequence~$\ell^{(k,m,p)}$ is algebraically
realized by~$T$.
\end{proof}

As usual in relation to Bernoulli and Euler
numbers the prime~$2$ behaves somewhat differently.

\begin{lemma}\label{lemma2caseisspecial}
The sequence~$\bigl(2^{2+\ord_2(n)}\bigr)$ is
algebraically realizable.
\end{lemma}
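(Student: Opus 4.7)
The plan is to realize this sequence by the automorphism~$T\colon\TT_2\to\TT_2$ of the group~$\TT_2$ from~(\ref{equationptorsiontorus}) given by~$T(x)=5x$.  Since~$5$ is coprime to~$2$, multiplication by~$5$ is an automorphism of~$\TT_2$, its~$n$th iterate is multiplication by~$5^n$, and hence
\[
\fixset_n(T)=\{x\in\TT_2\mid(5^n-1)x=0\}
\]
is the~$(5^n-1)$-torsion subgroup of~$\TT_2$, which has order~$\lfloor5^n-1\rfloor_2=2^{\ord_2(5^n-1)}$.

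The remaining arithmetic step is to verify that~$\ord_2(5^n-1)=2+\ord_2(n)$ for every~$n\ge1$, which is a direct application of the lifting-the-exponent lemma at~$p=2$.  For odd~$n$ the cofactor~$5^{n-1}+\cdots+1$ is a sum of~$n$ odd terms, so it is odd and~$\ord_2(5^n-1)=\ord_2(5-1)=2=2+\ord_2(n)$.  For even~$n$ the~$p=2$ form of lifting-the-exponent gives
\[
\ord_2(5^n-1)=\ord_2(5-1)+\ord_2(5+1)+\ord_2(n)-1=2+\ord_2(n),
\]
and combining the two cases yields~$\fix_n(T)=2^{2+\ord_2(n)}$ as required.

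The argument is essentially mechanical; the only point that deserves comment is why one cannot simply reapply Lemma~\ref{lemmaOddPrimeAlgRealizableCase} verbatim.  At odd~$p$ the binomial expansion delivers~$(I+pB)^{p^sr}\equiv I+p^{s+1}rB$ modulo~$p^{s+2}$, and a single iteration controls the~$p$-valuation precisely; at~$p=2$ the additional term~$\binom{p^sr}{2}(2B)^2$ has the same~$2$-valuation as the linear term and the naive expansion breaks.  The remedy is to start from a~$2$-adic unit congruent to~$1$ modulo~$4$ (taking~$u=5$ is the smallest natural choice), thereby shifting the base~$2$-adic valuation of~$u-1$ from~$1$ to~$2$ and producing exactly the exponent~$2+\ord_2(n)$ promised by the lemma; so the main conceptual hurdle is simply recognising that a scalar rather than a matrix eigenvalue suffices, and that one needs~$u\equiv1\pmod4$.
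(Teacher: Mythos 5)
Your proof is correct and takes essentially the same route as the paper: both realize the sequence by multiplication by $5$ on $\TT_2$ and reduce to the identity $\ord_2(5^n-1)=2+\ord_2(n)$, which the paper attributes to ``the binomial theorem'' and you derive via lifting-the-exponent at $p=2$. The closing remark about why the odd-prime Lemma~\ref{lemmaOddPrimeAlgRealizableCase} does not apply verbatim at $p=2$ is a nice piece of additional exposition but is not needed for the argument.
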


\begin{proof}
Let~$T\colon\TT_2\to\TT_2$ be the endomorphism defined by~$T(x)=5x$ modulo~$1$
on the group~$\TT_2$ defined in~\eqref{equationptorsiontorus}.
Notice that for~$r\ge0$ and~$m\ge1$ odd
we have~$2^{r+2}\divides 5^{2^rm}-1$
and~$2^{r+3}\notdivides5^{2rm}-1$ by the binomial
theorem.
It follows that~$5^n-1=2^{\ord_2(n)+2}s$
with~$s\ge1$ odd.

If~$T^nx=x$ this gives~$x=\frac{k}{2^{\ord_2(n)+2}}$
for~$k\in\{0,\dots,2^{\ord_2(n)+2}-1\}$ showing
that~$\fix_n(T)=2^{2+\ord_2(n)}$ as desired.
\end{proof}

All the proofs of realizability for the sequences~$b$,~$t$,
and~$e$ essentially follow from one of various formulations of
the Kummer congruences.

\begin{proof}[Proof of Theorem{\rm~\ref{theoremBTEresult}(a)}]
We claim first that
\begin{equation}\label{equationbsequenceasproduct}
b_n
=
2\prod_{p\in\PP\atop{p-1\mid 2n}}p^{1+\ord_p(n)}
\end{equation}
for all~$n\ge1$.
To see this, recall that the von Staudt--Clausen theorem shows that
the denominator~$d_n$ of~$B_{2n}$ is given by
\begin{equation}\label{equationvS-Ctheorem}
d_n
=
\prod_{p\in\PP\atop{p-1\mid 2n}} p
\end{equation}
for all~$n\ge1$.
At the prime~$2$ this gives~$\lfloor d_n\rfloor_2=2$
and so~$\lfloor b_n\rfloor_2=2^{2+\ord_2(n)}$
for all~$n\ge1$.
Similarly,~$\lfloor d_n\rfloor_3=3$
and so~$\lfloor b_n\rfloor_3=3^{1+\ord_3(n)}$
for all~$n\ge1$, showing~\eqref{equationbsequenceasproduct}
for the~$2$ and~$3$-part of~$b$.
Now assume that~$p\ge5$.
If we have~$p-1\divides2n$ then~$\lfloor d_n\rfloor_p=p$
by von Staudt--Clausen, so~$B_{2n}\not\equiv0$ modulo~$p$
and hence~$\lfloor b_n\rfloor_p=p^{1+\ord_p(n)}$.
If~$p-1\notdivides2n$ then
by Adams' theorem~\cite{zbMATH02710998}
we know that~$B_{2n}\equiv0$ modulo~$\lfloor n\rfloor_p$
and so~$\lfloor b_n\rfloor_p=1$. That is,
\[
\lfloor b_n\rfloor_p
=
\left\{
\begin{array}{ll}
1&\mbox{if }p-1\notdivides 2n\\
p^{1+\ord_p(n)}&\mbox{if }p-1\divides 2n
             \end{array}\right.
\]
which shows~\eqref{equationbsequenceasproduct} for~$p$.

We next use the expression~\eqref{equationbsequenceasproduct}
to show that~$b$ is locally algebraically realizable
at every prime.
By~\eqref{equationbsequenceasproduct}, the~$2$-part
of~$b$ is the sequence~$\bigl(2^{2+\ord_2(n)}\bigr)$
which is algebraically realizable by Lemma~\ref{lemma2caseisspecial}.
For an odd prime~$p$ the same formula
shows that the~$p$-part
of~$b$ is equal to~$\ell^{(\frac{p-1}{2},1,p)}$,
which is algebraically realizable by Lemma~\ref{lemmaOddPrimeAlgRealizableCase}.
\end{proof}

\section{Bernoulli Numerators}

Before looking at the Bernoulli numerator sequence
we recall some elementary results from modular
arithmetic.

\begin{lemma}\label{lemmaStayingAlive}
Let~$n=2^rm$ with~$m$ odd,~$r\ge1$,
and~$k=\frac{n}{2}$.
Then~$\frac{5^n-1}{2^{r+2}}$
and~$\frac{5^k-1}{2^{r+1}}$ are both
odd and satisfy
\[
\frac{5^n-1}{2^{r+2}}
\equiv
\frac{5^k-1}{2^{r+1}}
\pmod{2^r}.
\]
\end{lemma}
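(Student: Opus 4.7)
The plan is to reduce the congruence to a direct $2$-adic calculation based on the factorisation
\[
5^n-1=(5^k-1)(5^k+1),
\]
where $k=n/2=2^{r-1}m$.

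First I would verify that both of the stated quotients are odd integers. The argument in Lemma~\ref{lemma2caseisspecial} already showed, by the binomial theorem, that $\ord_2(5^N-1)=2+\ord_2(N)$ whenever $N$ is even (and $=2$ when $N$ is odd). Applied to $N=n=2^rm$ with $r\ge1$ this gives $\ord_2(5^n-1)=r+2$, so $\tfrac{5^n-1}{2^{r+2}}$ is odd. Applied to $N=k=2^{r-1}m$ it gives $\ord_2(5^k-1)=(r-1)+2=r+1$ if $r\ge2$ (when $k$ is even) and $\ord_2(5^k-1)=2=r+1$ if $r=1$ (when $k$ is odd), so $\tfrac{5^k-1}{2^{r+1}}$ is odd in every case. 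Both parts of the first assertion follow.

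Next write $u=\tfrac{5^k-1}{2^{r+1}}$, which is an odd integer by the previous paragraph. Then
\[
5^k+1=(5^k-1)+2=2^{r+1}u+2=2\bigl(2^r u+1\bigr),
\]
and the factor $2^ru+1$ is odd (since $r\ge1$). Substituting into the factorisation of $5^n-1$ gives
\[
\frac{5^n-1}{2^{r+2}}
=\frac{(5^k-1)(5^k+1)}{2^{r+2}}
=\frac{2^{r+1}u\cdot 2(2^ru+1)}{2^{r+2}}
=u\bigl(2^ru+1\bigr)
=u+2^r u^2.
\]
Since $2^r u^2\equiv 0\pmod{2^r}$, this is congruent to $u=\tfrac{5^k-1}{2^{r+1}}$ modulo $2^r$, which is the desired congruence.

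There is no serious obstacle here; the argument is essentially bookkeeping of $2$-adic valuations, and the only point deserving care is that the formula $\ord_2(5^N-1)=2+\ord_2(N)$ changes shape when $N$ is odd, which is why one separately checks the case $r=1$ (so that $k=m$ is odd). Both cases collapse into the single identity above, so I would present the proof in the uniform form given rather than splitting cases.
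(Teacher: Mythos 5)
Your proof is correct and follows essentially the same route as the paper: the paper's proof first appeals to the binomial expansion of $(1+4)^{2^r m}$ to establish that both quotients are odd, then states without detail that ``a calculation shows'' the identity $\frac{5^n-1}{2^{r+2}}-\frac{5^k-1}{2^{r+1}}=2^r\bigl(\frac{5^k-1}{2^{r+1}}\bigr)^2$, which is exactly what you derive via the factorisation $5^n-1=(5^k-1)(5^k+1)$. You have simply filled in the routine algebra that the paper elides.
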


\begin{proof}
The binomial expansion of~$(1+2^2)^{2^rm}$
shows that the two fractions are odd integers.
Then a calculation shows that
\[
\frac{5^n-1}{2^{r+2}}
-
\frac{5^k-1}{2^{r+1}}
=
2^r\Bigl(
\frac{5^k-1}{2^{r+1}}
\Bigr)^2
\equiv
0\pmod{2^r}.
\]
\end{proof}

A primitive root~$\gamma_p$ modulo~$p\in\PP$
may have the property that~$p^2\divides\gamma_p^{p-1}-1$,
which creates difficulties in many different
settings: In particular, such a primitive root
is not primitive modulo~$p^2$
(see, for example, the work of Cohen~{\it{et al.}}~\cite{MR340202}).
However, if~$\gamma$ is a primitive root modulo~$p$
but not modulo~$p^2$ then
the binomial expansion of~$(\gamma+mp)^{p-1}$ modulo~$p^2$
shows that~$\gamma+mp$ is a primitive
root modulo~$p^2$ for any~$m$ not divisible by~$p$.

\begin{lemma}\label{lemmaPrimitiveRootIdea1}
Let~$\gamma_p>1$ be a primitive root modulo an odd prime~$p\in\PP$
with~$p^2\notdivides\gamma_p^{p-1}-1$ and let~$m,r\ge1$
satisfy~$p\notdivides m$.
If~$n=\frac{p^r(p-1)m}{2}$ and~$k=\frac{n}{p}$
then~$\frac{\gamma_p^{2n}-1}{p^{r+1}}$
and~$\frac{\gamma_p^{2k}-1}{p^r}$ are
integers not divisible by~$p$
and
\[
\frac{\gamma_p^{2n}-1}{p^{r+1}}
\equiv
\frac{\gamma_p^{2k}-1}{p^r}
\pmod{p^r}.
\]
\end{lemma}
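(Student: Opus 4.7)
The plan is to mirror the structure of Lemma~\ref{lemmaStayingAlive}, replacing the role played there by the binomial expansion of~$(1+4)^{2^r m}$ with an analogous expansion based on the $p$-adic valuation of~$\gamma_p^{p-1}-1$. The hypothesis~$p^2\notdivides\gamma_p^{p-1}-1$ together with~$\gamma_p^{p-1}\equiv 1\pmod{p}$ (since~$\gamma_p$ is a primitive root) means precisely that~$\ord_p(\gamma_p^{p-1}-1)=1$; this is the arithmetic counterpart of the fact that~$\ord_2(5-1)=2$ is known exactly in Lemma~\ref{lemmaStayingAlive}.

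First I would establish the two integrality and non-divisibility statements. Since $2n=p^r(p-1)m$ and $2k=p^{r-1}(p-1)m$ with $p\notdivides m$, writing $\gamma_p^{2n}=(\gamma_p^{p-1})^{p^r m}$ and $\gamma_p^{2k}=(\gamma_p^{p-1})^{p^{r-1}m}$ reduces the problem to computing $\ord_p(\alpha^{p^s m}-1)$ where $\alpha=\gamma_p^{p-1}$ has $\ord_p(\alpha-1)=1$. The standard lifting-the-exponent identity for odd primes $p$ gives $\ord_p(\alpha^{p^s m}-1)=\ord_p(\alpha-1)+\ord_p(p^s m)=s+1$, yielding $\ord_p(\gamma_p^{2n}-1)=r+1$ and $\ord_p(\gamma_p^{2k}-1)=r$ exactly. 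If a self-contained derivation is preferred, the same conclusion follows from a direct binomial expansion of $(1+pu)^{p^s m}$ with $u$ the unit $(\gamma_p^{p-1}-1)/p$.

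Next, for the congruence, I would write $\gamma_p^{2k}=1+p^r v$ with $v\in\ZZ$ and $p\notdivides v$. Since $n=pk$ we have $\gamma_p^{2n}=(1+p^r v)^p$, and the binomial expansion gives
\[
\gamma_p^{2n}-1
=
\sum_{j=1}^{p}\binom{p}{j}p^{jr}v^{j}.
\]
Dividing by $p^{r+1}$, the $j=1$ term contributes exactly $v$, which equals $\frac{\gamma_p^{2k}-1}{p^r}$. The main task is then to check that every $j\ge2$ term lies in $p^r\ZZ$. For $j=2$ the term is $\frac{p-1}{2}p^r v^2$, which is manifestly divisible by $p^r$ (using that $p$ is odd so $\frac{p-1}{2}\in\ZZ$). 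For $j\ge3$ the exponent of $p$ is $jr-r-1=(j-1)r-1\ge2r-1\ge r$ as soon as $r\ge1$, so even without invoking $\ord_p(\binom{p}{j})\ge 1$ the term lies in $p^r\ZZ$.

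The only genuine obstacle is bookkeeping of $p$-adic valuations in this binomial expansion, in particular verifying that the inequality $(j-1)r-1\ge r$ really does hold for all relevant $j\ge3$ and $r\ge1$; once that is in hand, collecting the contributions modulo $p^r$ yields $\frac{\gamma_p^{2n}-1}{p^{r+1}}\equiv v \equiv \frac{\gamma_p^{2k}-1}{p^r}\pmod{p^r}$, completing the proof. Note that the hypothesis $p^2\notdivides\gamma_p^{p-1}-1$ is essential precisely to force the exponents $r+1$ and $r$ to be exact, without which the denominators would be wrong and the statement meaningless.
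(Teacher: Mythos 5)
Your proof is correct and takes a genuinely different, arguably cleaner route to the same conclusion. For the integrality and exact $p$-adic valuation claims you invoke the lifting-the-exponent lemma directly, whereas the paper expands $(1+sp)^{p^{r-1}m}$ by hand to show that $\gamma_p$ remains a primitive root modulo every $p^r$; both yield exact valuations $r+1$ and $r$. For the congruence you write $\gamma_p^{2k}=1+p^rv$ with $p\notdivides v$ and expand $\gamma_p^{2n}=(1+p^rv)^p$ by the binomial theorem, isolating the $j=1$ term and checking term by term that every $j\ge2$ contribution lies in $p^r\ZZ$ (the $j=2$ term using that $\binom{p}{2}p^{2r}/p^{r+1}=\tfrac{p-1}{2}p^r$ since $p$ is odd, the $j\ge3$ terms following from the raw valuation bound $(j-1)r-1\ge r$). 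The paper instead establishes the closed-form identity
\[
\frac{\gamma_p^{2n}-1}{p^{r+1}}-\frac{\gamma_p^{2k}-1}{p^r}
=
p^{r-1}\Bigl(\frac{\gamma_p^{2k}-1}{p^r}\Bigr)^2\sum_{j=1}^{p-1}j\,\gamma_p^{2k(p-1-j)}
\]
and observes that the sum is $\equiv\sum_{j=1}^{p-1}j\equiv0\pmod p$ because $\gamma_p^{2k}\equiv1\pmod p$, which supplies the extra factor of~$p$ that lifts the right-hand side into $p^r\ZZ$. Both arguments rest on the same structural fact—that $\gamma_p^{2n}$ is the $p$th power of something $\equiv1\pmod{p^r}$—but yours skips deriving and verifying the polynomial identity, at the modest cost of slightly more valuation bookkeeping; either is a complete and satisfactory proof.
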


\begin{proof}
By a standard argument~$\gamma_p$ is a primitive root modulo~$p^r$
for any~$r\ge1$.
It follows that~$\gamma_p^{2n}-1=p^{r+1}c$ for some~$c\ge1$
with~$p\notdivides c$
and so~$\frac{\gamma_p^{2n}-1}{p^{r+1}}$
and~$\frac{\gamma_p^{2k}-1}{p^r}$ are
integers not divisible by~$p$.
By differentiating the geometric series
\[
\sum_{j=0}^{n}a^j=\frac{1-a^{n+1}}{1-a}
\]
for~$a\neq1$ with respect to~$a$ we obtain
\[
\sum_{j=1}^{n}ja^j
=
\frac{a\bigl(1-(n+1)a^n+na^{n+1}\bigr)}{(1-a)^2}.
\]
Using this with~$n=p-1$ and~$a=\gamma_p^{-2k}$ shows that
\begin{equation}\label{equationCongruencePrimitiveRoot1}
\frac{\gamma_p^{2n}-1}{p^{r+1}}
-
\frac{\gamma_p^{2k}-1}{p^r}
=
p^{r-1}
\Bigl(\frac{\gamma_p^{2k}-1}{p^r}\Bigr)^2
\sum_{j=1}^{p-1}j\gamma_p^{2k(p-1-j)}.
\end{equation}
Since~$p-1\divides2k$ and~$\gamma_p$ is a primitive
root modulo~$p$ we have~$\gamma_p^{2k(p-1-j)}\equiv1$
modulo~$p$ for~$j\in\{1,\dots,p-1\}$
and hence
\[
\sum_{j=1}^{p-1}j\gamma_p^{2k(p-1-s)}
\equiv
\sum_{j=1}^{p-1}j
=
\frac{p(p-1)}{2}\pmod{p}.
\]
It follows that there is an additional
factor of~$p$ in the right-hand side
of~\eqref{equationCongruencePrimitiveRoot1},
giving the lemma.
\end{proof}

In addition to Adams' theorem and the von Staudt--Clausen theorem
already used, we recall a simple form of Kummer's congruence
as follows.
If~$m,n,r$ are integers with
\[
1\le r\le 2n-1\le 2m-1
\]
then for any odd~$p\in\PP$ with~$p-1\notdivides 2n$
and~$2m\equiv2n$ modulo~$\phi(p^r)$ we
have
\begin{equation}\label{equationKummerNumerator}
\frac{B_{2m}}{2m}\equiv\frac{B_{2n}}{2n}\pmod{p^r}.
\end{equation}

We will also need a more recent formulation
of a Kummer theorem due to Young~\cite{MR1713481}:
Let~$p\in\PP$ be odd and~$n,r\ge1$
with~$r=\ord_p(n)$ and~$p-1\divides2n$.
If~$k=\frac{n}{p}$ then
\begin{equation}\label{equationYoungCongruence}
(\gamma_p^{2n}-1)\frac{B_{2n}}{2n}
\equiv
(\gamma_p^{2k}-1)\frac{B_{2k}}{2k}\pmod{p^r}
\end{equation}
where~$\gamma_p$ is a primitive
root modulo~$p$ satisfying~$p^2\notdivides\gamma_p^{p-1}-1$.
Once again the prime~$2$ behaves a little differently,
and a form of Young's theorem will be needed
for this case which requires a separate argument
following the ideas of Young.

\begin{lemma}\label{lemmaFiveCongruence1}
Let~$n$ and~$r$ be positive integers with~$r=\ord_2(n)$.
If~$k=\frac{n}{2}$ then
\[
(5^n-1)\frac{B_{2n}}{2n}
\equiv
(5^k-1)\frac{B_{2k}}{2k}\pmod{2^r}.
\]
\end{lemma}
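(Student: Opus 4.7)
My plan is to follow Young's proof of~\eqref{equationYoungCongruence} with the prime~$2$ replacing odd~$p$, using the unit~$5\in\ZZ_2^\times$ in place of the primitive root~$\gamma_p$. The analogy is a good one: $5$ topologically generates the torsion-free pro-$2$ part of~$\ZZ_2^\times$, and Lemma~\ref{lemmaStayingAlive} is exactly the $2$-adic counterpart of Lemma~\ref{lemmaPrimitiveRootIdea1} that Young used.

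The first step I would take is a bookkeeping reduction. Writing~$n=2^r m$ with~$m$ odd (so~$k=2^{r-1}m$) and using the von Staudt--Clausen theorem, which forces~$2B_{2j}$ to be a~$2$-adic unit for every~$j\ge1$, a direct computation gives
\[
(5^n-1)\frac{B_{2n}}{2n}=\frac{\alpha\cdot 2B_{2n}}{m},\qquad (5^k-1)\frac{B_{2k}}{2k}=\frac{\beta\cdot 2B_{2k}}{m},
\]
where~$\alpha=(5^n-1)/2^{r+2}$ and~$\beta=(5^k-1)/2^{r+1}$ are the odd integers from Lemma~\ref{lemmaStayingAlive}. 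Since~$\alpha\equiv\beta\pmod{2^r}$ and~$2B_{2j}$ is a~$2$-adic unit, a short manipulation of~$\alpha\cdot 2B_{2n}-\beta\cdot 2B_{2k}=\beta(2B_{2n}-2B_{2k})+(\alpha-\beta)\cdot 2B_{2n}$ reduces the target congruence to the cleaner $p=2$ Kummer-type statement
\[
2B_{2n}\equiv 2B_{2k}\pmod{2^r}.
\]

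This is the real content of the lemma and is where Young's machinery would enter. My intended route is to express~$(5^j-1)B_{2j}/(2j)$ as a~$2$-adic Mazur integral
\[
(5^j-1)\frac{B_{2j}}{2j}=-\int_{\ZZ_2^\times}x^{2j-1}\,d\mu_5(x)
\]
against a measure~$\mu_5$ associated to multiplication by~$5$ on~$\ZZ_2$. The difference of the two sides then becomes
\[
-\int_{\ZZ_2^\times}x^{2k-1}(x^n-1)\,d\mu_5(x),
\]
and for every~$x\in\ZZ_2^\times$ the factor~$x^n-1$ has~$\ord_2\ge r+2$ by the binomial/lifting-the-exponent argument that proves Lemma~\ref{lemmaStayingAlive}, extended from~$x=5$ to arbitrary odd~$x$ (any odd~$x$ satisfies~$\ord_2(x^2-1)\ge3$, and the usual factorisation~$x^{2^r}-1=(x-1)(x+1)\prod_{j=1}^{r-1}(x^{2^j}+1)$ then gives exactly~$r+2$ factors of~$2$). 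The pointwise divisibility by~$2^{r+2}$ survives integration against the bounded measure~$\mu_5$, which is more than enough.

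The main obstacle I anticipate is setting up the~$2$-adic Mazur measure precisely, since the special role of~$2$ in~$\phi(2^N)=2^{N-1}$ requires a minor modification of the odd-$p$ construction. A safer fall-back is to mimic Young's argument at the level of explicit Voronoi-style finite sums: express both sides as sums, over odd residues~$a$ modulo~$2^N$ for large~$N$, of~$a^{2j-1}$ times a~$5$-dependent weight, and verify the required~$2^r$-divisibility termwise using the identity at the heart of Lemma~\ref{lemmaStayingAlive}. Either route bypasses the need for any Kummer congruence at~$p=2$ beyond what is produced by this construction itself.
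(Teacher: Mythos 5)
Your opening reduction is correct and quite clean: pulling the odd parts out of $(5^n-1)/(2n)$ and $(5^k-1)/(2k)$ and invoking Lemma~\ref{lemmaStayingAlive} (together with von Staudt--Clausen, which makes $2B_{2j}$ a $2$-adic unit) does show that the lemma is equivalent to $2B_{2n}\equiv 2B_{2k}\pmod{2^r}$. But note this is a reformulation, not a simplification: the reduced statement is itself a $2$-adic Kummer congruence with exactly the same content, and you then abandon it and return to the $(5^j-1)$-regularized quantity.

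The real gap is the Mazur-measure identity. What you write, $\int_{\ZZ_2^\times}x^{2j-1}\,d\mu_5(x)=-(5^j-1)\frac{B_{2j}}{2j}$, is not the moment formula for the regularized Bernoulli measure attached to the unit $c=5$. That measure gives, up to sign conventions,
\[
\int_{\ZZ_2^\times}x^{2j-1}\,d\mu_5(x)
=
\pm\,(1-5^{2j})\,(1-2^{2j-1})\,\frac{B_{2j}}{2j},
\]
with the exponent $5^{2j}$ (not $5^{j}$) and with the Euler factor at $2$ present. Your pointwise estimate $\ord_2(x^{n}-1)\geq r+2$ for odd $x$ is right, so the integral difference has $2$-adic valuation $\geq r+2$; but to descend from the congruence for $(1-5^{2j})(1-2^{2j-1})\frac{B_{2j}}{2j}$ to the one stated in the lemma you must remove the odd factors $(5^j+1)/2$ and $(1-2^{2j-1})$ from both sides. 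Since $(5^n+1)/2-(5^k+1)/2$ has $2$-adic valuation exactly~$r$, this extraction eats exactly the spare $2$-power and forces the final modulus down to $2^r$, with the $r=1$ case needing a separate (trivial) check. None of these manipulations appear in the proposal, and they are the nontrivial content. The paper's own route is different and shorter: it defines the integer sequence $a_j$ by $\sum a_j t^j/j! = t^{-1}\bigl(\tfrac{5t}{\eul^{5t}-1}-\tfrac{t}{\eul^t-1}\bigr)$, cites Young's congruence~\cite{MR1713481} directly to get $a_{2n-1}\equiv a_{2k-1}\pmod{2^{\min\{n-1,r+2\}}}$ (which is exactly the congruence for $(5^{2j}-1)\frac{B_{2j}}{2j}$), and then strips off the factors $5^j+1$ using $\ord_2(5^q+1)=1$. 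Your stated fall-back---``mimic Young's argument as explicit finite sums''---is in effect a promise to reprove Young's theorem; it is not a small filler step, and as written the proposal does not carry it out.
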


\begin{proof}
Define a sequence of integers~$a$ by
\[
\sum_{n=0}^{\infty}a_n\frac{t^n}{n!}
=
t^{-1}\Bigl(\frac{5t}{\eul^{5t}-1}-\frac{t}{\eul^t-1}\Bigr)
=
t^{-1}\sum_{n=0}^{\infty}(5^n-1)B_n\frac{t^n}{n!}.
\]
By~\cite{MR1713481} we have
\[
a_{2n-1}
\equiv
a_{2k-1}
\pmod{2^s}
\]
where~$s=\min\{n-1,r+2\}$
and hence
\begin{equation}\label{equationYoungCongruencePrime2}
(5^{2n}-1)\frac{B_{2n}}{2n}
\equiv
(5^{2k}-1)\frac{B_{2k}}{2k}
\pmod{2^s}
\end{equation}
where~$s=\min\{n-1,r+2\}$.
Notice that~$\ord_2(5^q+1)=1$ for any~$q\ge1$,
so~\eqref{equationYoungCongruencePrime2}
implies that
\[
(5^{2n}-1)\frac{B_{2n}}{2n}
\equiv
(5^{2k}-1)\frac{B_{2k}}{2k}
\pmod{2^u}
\]
where~$u=\min\{n-2,r+1\}$.
Since~$r\le\min\{n-2,r+1\}$ for~$n>2$
this shows the lemma
in the case~$n>2$;
for~$n=2$ it is clear.
\end{proof}

\subsection{Dold Congruence for Bernoulli Numerators}

For brevity define an integer sequence~$o=(o_n)$
by
\begin{equation}\label{equationOrbitsForNumeratorSequence}
o_n=\sum_{d\smalldivides n}\mu\bigl(\tfrac{n}{d}\bigr)t_d
\end{equation}
so~$o=(1, 0, 0, 0, 0, 690, 0, 3616, 43866, \dots)$;
notice that~$(\frac{1}{n}o_n)=\seqnum{A060309}$.

\begin{lemma}\label{lemmaNumeratorCongruence}
For any~$n\ge1$ we have~$n\divides o_n$.
\end{lemma}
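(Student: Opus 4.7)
The plan is to apply the Arias de Reyna reformulation of the Dold congruence noted in the introduction: it suffices to prove that, for every prime~$p$, every~$n\ge1$ with~$\gcd(n,p)=1$, and every~$m\ge1$,
\[
t_{np^m} \equiv t_{np^{m-1}} \pmod{p^m}.
\]
I would split into three cases according to~$p$ and to whether~$p-1\smalldivides 2n$.

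In the first case, with~$p$ odd and~$p-1\smallnotdivides 2n$, the von Staudt--Clausen theorem~\eqref{equationvS-Ctheorem} implies that~$p\smallnotdivides b_{np^j}$ for all~$j\ge0$, so~$B_{2np^j}/(2np^j)$ is a~$p$-adic integer. Since~$2np^m \equiv 2np^{m-1} \pmod{\phi(p^m)}$ and~$p-1\smallnotdivides 2np^{m-1}$, Kummer's congruence~\eqref{equationKummerNumerator} yields
\[
\frac{B_{2np^m}}{2np^m} \equiv \frac{B_{2np^{m-1}}}{2np^{m-1}} \pmod{p^m}.
\]
To convert this to an integer congruence on the numerators, I would compare the denominators using~\eqref{equationbsequenceasproduct}: the only way~$b_{np^m}$ and~$b_{np^{m-1}}$ can differ is through auxiliary primes~$p'$ with~$\ord_p(p'-1)=m$ and~$(p'-1)/p^m\smalldivides 2n$, and each such~$p'$ satisfies~$p' \equiv 1 \pmod{p^m}$. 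The ratio~$b_{np^m}/b_{np^{m-1}}$ is therefore trivial modulo~$p^m$, and the sign~$(-1)^{N+1}$ is preserved because~$p$ is odd.

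For the second case, with~$p$ odd and~$p-1\smalldivides 2n$, I would apply Young's congruence~\eqref{equationYoungCongruence}, choosing~$\gamma_p$ with~$p^2\smallnotdivides\gamma_p^{p-1}-1$, to obtain
\[
(\gamma_p^{2np^m}-1)\frac{B_{2np^m}}{2np^m} \equiv (\gamma_p^{2np^{m-1}}-1)\frac{B_{2np^{m-1}}}{2np^{m-1}} \pmod{p^m}.
\]
The two sides are~$p$-adic units: the factor~$\gamma_p^{2N}-1$ has~$p$-adic valuation exactly~$\ord_p(N)+1$, while~$B_{2N}/(2N)$ has valuation exactly~$-(\ord_p(N)+1)$. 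Dividing these exact powers out of both sides and combining with Lemma~\ref{lemmaPrimitiveRootIdea1} reduces the assertion to~$t_{np^m}/\tilde b_{np^m} \equiv t_{np^{m-1}}/\tilde b_{np^{m-1}} \pmod{p^m}$, where~$\tilde b_N$ denotes the prime-to-$p$ part of~$b_N$; the same comparison of denominators as in the first case then concludes this case.

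The third case, $p=2$, is handled analogously, using Lemma~\ref{lemmaFiveCongruence1} in place of Young's congruence and Lemma~\ref{lemmaStayingAlive} in place of Lemma~\ref{lemmaPrimitiveRootIdea1}; for~$m=1$ the required congruence reduces to the trivial observation that every~$t_N$ is odd. The main obstacle throughout is precisely the last bookkeeping step: passing from the Kummer-type congruence on the~$p$-adic fractions~$B_{2N}/(2N)$ to an integer congruence on the numerators~$t_N$. It requires careful accounting for those auxiliary primes~$p'$ whose appearance in~$b_N$ is controlled by how high a power of~$p$ divides~$p'-1$, and for the exact~$p$-adic valuations of the primitive-root factors in cases two and three.
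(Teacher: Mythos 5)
Your proposal is correct and follows essentially the same route as the paper: the same three-way split into ($p$ odd, $p-1\nmid 2n$), ($p$ odd, $p-1\mid 2n$), and $p=2$; the same appeal to Kummer's congruence~\eqref{equationKummerNumerator}, Young's congruence~\eqref{equationYoungCongruence} together with Lemma~\ref{lemmaPrimitiveRootIdea1}, and Lemmas~\ref{lemmaFiveCongruence1} and~\ref{lemmaStayingAlive} respectively; and the same passage to the numerators via the explicit description~\eqref{equationbsequenceasproduct} of the denominators, with the ratio $b_{np^m}/b_{np^{m-1}}$ a product of primes $\equiv 1\pmod{p^m}$. The only cosmetic difference is that you invoke Arias de Reyna's criterion directly, whereas the paper expands the M\"obius sum as $o_n=\sum_{d\mid m}\mu(m/d)(t_{p^rd}-t_{p^{r-1}d})$ and draws the same conclusion term by term; these are equivalent.
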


\begin{proof}
We can assume that~$n>1$.
Pick~$p\in\PP$ dividing~$n$ and write~$n=p^rm$ with~$r,m\ge1$
and~$p\notdivides m$.

Suppose first that~$p-1\notdivides2n$. Then~$p$
is odd so~\eqref{equationKummerNumerator}
shows that
\[
(-1)^{n+1}\frac{t_n}{b_n}
=
\frac{B_{2n}}{2n}
\equiv
\frac{B_{2k}}{2k}
=
(-1)^{k+1}\frac{t_k}{b_k}
\pmod{p^r}
\]
since~$n\equiv k$ modulo~$\phi(p^r)$
where~$k=\frac{n}{p}$.
It follows that
\begin{equation}\label{equationYoungCongruence3}
\frac{t_n}{b_n}
\equiv
\frac{t_k}{b_k}\pmod{p^r}
\end{equation}
since~$p$ being odd implies that~$(-1)^{n+1}=(-1)^{k+1}$.
By~\eqref{equationbsequenceasproduct}
we can therefore write~$b_n=hb_k$
with~$d_n\ge1$ congruent to~$1$ modulo~$p^r$.
Adams' theorem shows that~$\gcd(p,b_n)=1$
and so~\eqref{equationYoungCongruence3}
shows that
\begin{equation}\label{equationYoungCongruence4}
t_n\equiv ht_k\equiv t_k\pmod{p^r}.
\end{equation}
Since
\[
o_n=\sum_{d\divides m}\mu\bigl(\tfrac{m}{d}\bigr)\bigl(t_{p^rd}-t_{p^{r-1}d}\bigr),
\]
the congruence~\eqref{equationYoungCongruence4} shows that~$p^r\divides o_n$.

Now suppose that~$p-1\divides2n$,~$m\ge1$, and~$r=\ord_p(n)\ge1$.

If~$p=2$ then~$n=2^rm$ with~$m$ odd.
Thus
\[
(5^n-1)\frac{B_{2n}}{2n}
\equiv
(5^k-1)\frac{B_{2k}}{2k}\pmod{2^r}
\]
by Lemma~\ref{lemmaFiveCongruence1}.
For~$r>1$ this gives
\begin{equation}\label{equationElaineDickinson}
(5^n-1)\frac{t_{n}}{b_n}
\equiv
(5^k-1)\frac{t_{k}}{b_k}\pmod{2^r}
\end{equation}
and for~$r=1$ this gives
\[
(5^n-1)\frac{t_{n}}{b_n}
\equiv
-(5^k-1)\frac{t_{k}}{b_k}
\equiv
(5^k-1)\frac{t_{k}}{b_k}\pmod{2},
\]
so~\eqref{equationElaineDickinson} holds for all~$r\ge1$.
By~\eqref{equationbsequenceasproduct} there are
odd positive integers~$g_n,h$
with~$h\equiv1$ modulo~$2^r$ such that~$b_n=2^{r+2}g_nh$
and~$b_k=2^{r+1}g_n$.
Thus~\eqref{equationElaineDickinson} can be written
\[
\Bigl(\frac{5^n-1}{2^{r+2}}\Bigr)\frac{t_n}{g_nh}
\equiv
\Bigl(\frac{5^k-1}{2^{r+1}}\Bigr)\frac{t_k}{g_n}
\pmod{2^r}
\]
so
\[
\frac{t_n}{g_nh}
\equiv
\frac{t_k}{g_n}
\pmod{2^r}
\]
by Lemma~\ref{lemmaStayingAlive}
and hence~$t_n\equiv ht_k$
modulo~$2^r$ since~$g_n$ and~$d_n$ are odd.
This gives~$t_n\equiv t_k$ modulo~$2^r$
since~$h\equiv1$ modulo~$2^r$.

Now assume that~$p$ is odd so we have~$2n=p^r(p-1)m$
for some~$m\ge1$ with~$p\notdivides m$.
By~\eqref{equationYoungCongruence}
we have
\[
(\gamma_p^{2n}-1)\frac{B_{2n}}{2n}
\equiv
(\gamma_p^{2k}-1)\frac{B_{2k}}{2k}\pmod{p^r}
\]
where~$\gamma_p>1$ is a primitive root modulo~$p$
with~$p^2\notdivides\gamma_p^{p-1}-1$.
By~\eqref{equationbsequenceasproduct} we can write~$b_n=p^{r+1}g_nh$
and~$b_k=p^rg_n$ with~$p\notdivides g_n$ and~$h\equiv1$
modulo~$p^r$ and so
\[
\Bigl(\frac{\gamma_p^{2n}-1}{p^{r+1}}\Bigr)
\frac{t_n}{g_nh}
\equiv
\Bigl(\frac{\gamma_p^{2k}-1}{p^{r}}\Bigr)
\frac{t_k}{g_n}\pmod{p^r}.
\]
By Lemma~\ref{lemmaPrimitiveRootIdea1} this
shows that
\[
t_n
\equiv
ht_k
\equiv
t_k
\pmod{p^r},
\]
so~$p^r\divides o_n$.

Finally, for~$n>1$ we write as usual~$n=p^rm$
with~$p\notdivides m$,~$r\ge1$ and some prime~$p$ dividing~$n$.
If~$k=\frac{n}{p}$ the arguments above
show that in all cases~$p^r\divides t_n-t_k$, so
the identity
\[
o_n
=
\sum_{d\smalldivides m}\mu\bigl(\tfrac{m}{d}\bigr)\bigl(t_{p^rd}-t_{p^{r-1}d}\bigr)
\]
shows that~$p^r\divides o_n$ and hence~$n\divides o_n$.
\end{proof}

\subsection{Sign Condition for Bernoulli Numerators}

In order to check the sign condition for the numerator
sequence~$t$ we will need the fact that it is preserved
under products (independently of whether the Dold congruence
holds, and without assuming the terms to be integers).
Recall that a real sequence~$\alpha$ is said to satisfy the
sign condition if~$\sum_{d\smalldivides n}\mu\bigl(\frac{n}{d}\bigr)\alpha_d\ge0$
for all~$n\ge1$.

\begin{lemma}\label{lemmaSignPreservedProducts1}
Assume that~$\alpha^{(1)}$ and~$\alpha^{(2)}$ are non-negative real sequences
satisfying the sign condition.
Then the product sequence~$\alpha^{(1)}\alpha^{(2)}=(\alpha_n^{(1)}\alpha_n^{(2)})$
also satisfies the sign condition.
\end{lemma}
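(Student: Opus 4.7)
The plan is to apply Möbius inversion twice: once to rewrite the sign condition for $\alpha^{(1)}$ and $\alpha^{(2)}$ separately so that all building blocks are visibly non-negative, and once to recognize what emerges as the sign condition for the product. Writing
\[
\Phi_n(\alpha)=\sum_{d\smalldivides n}\mu(n/d)\alpha_d
\]
for the `orbit-counting' transform, the sign condition says exactly that $\Phi_n(\alpha^{(i)})\ge0$ for all~$n\ge1$ and $i\in\{1,2\}$, and Möbius inversion gives the reconstruction formulas $\alpha^{(i)}_n=\sum_{d\smalldivides n}\Phi_d(\alpha^{(i)})$.

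Multiplying these two reconstructions term-by-term and substituting into the definition of $\Phi_n(\alpha^{(1)}\alpha^{(2)})$, I would interchange the order of summation to collect the coefficient of $\Phi_d(\alpha^{(1)})\Phi_e(\alpha^{(2)})$ for each pair $(d,e)$. This coefficient is
\[
\sum_{m:\,d\smalldivides m,\,e\smalldivides m,\,m\smalldivides n}\mu(n/m)=\sum_{\lcm(d,e)\smalldivides m\smalldivides n}\mu(n/m).
\]
Setting $L=\lcm(d,e)$ and $m=Lk$ the inner sum becomes $\sum_{k\smalldivides n/L}\mu((n/L)/k)$, which by the classical identity $\sum_{k\smalldivides N}\mu(N/k)=[N=1]$ equals $1$ if $L=n$ and $0$ otherwise. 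This produces the clean factorization
\[
\Phi_n(\alpha^{(1)}\alpha^{(2)})=\sum_{\lcm(d,e)=n}\Phi_d(\alpha^{(1)})\,\Phi_e(\alpha^{(2)}),
\]
which is manifestly a sum of non-negative terms.

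The only real work is the Möbius bookkeeping in collapsing the inner sum, and there is no genuine obstacle here; the argument is purely combinatorial and never uses that $\alpha^{(1)}$ or $\alpha^{(2)}$ is integer-valued, matching the generality required for its intended application to the numerator sign condition. It is worth noting that the factorization above is the arithmetic shadow of the dynamical fact that a point of period $n$ for the product of two systems is a pair of points whose periods have least common multiple~$n$, which is ultimately why the sign condition is preserved under products even in the absence of the Dold congruence.
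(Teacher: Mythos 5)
Your proof is correct, and it is a genuinely different argument from the one in the paper. The paper proceeds indirectly: it reduces by density to rational sequences, truncates to a finite range, multiplies by a ``repair factor'' (an lcm of denominators) to force the Dold congruence on the truncations, extends these to full realizable sequences, and then invokes the dynamical fact that a Cartesian product of two realizing systems realizes the termwise product. Your argument instead stays entirely inside M\"obius calculus: writing $\Phi_n(\alpha)=\sum_{d\mid n}\mu(n/d)\,\alpha_d$, inverting to get $\alpha_m=\sum_{d\mid m}\Phi_d(\alpha)$, multiplying, and collapsing $\sum_{\lcm(d,e)\mid m\mid n}\mu(n/m)$ by the standard identity $\sum_{k\mid N}\mu(N/k)=[N=1]$ yields the explicit identity
\[
\Phi_n\bigl(\alpha^{(1)}\alpha^{(2)}\bigr)
=
\sum_{\substack{d\mid n,\ e\mid n\\ \lcm(d,e)=n}}\Phi_d\bigl(\alpha^{(1)}\bigr)\,\Phi_e\bigl(\alpha^{(2)}\bigr),
\]
which is manifestly a sum of non-negative terms. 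This is more elementary than the paper's route and avoids the approximation and repair machinery entirely; it also makes transparent, at the level of formulas, the dynamical fact (periods combine by lcm in a product system) that the paper's proof invokes as a black box. The trade-off is small: the paper's argument recycles a repair-factor technique used elsewhere in that line of work, whereas yours is self-contained and yields a reusable closed form for $\Phi_n$ of a product, which is arguably the better proof to record here.
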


\begin{proof}
By density it is sufficient to show this for rational
sequences, so we assume that~$\alpha^{(j)}_n$
is rational for all~$n\ge1$
and~$j=1,2$.
Fix some integer~$m\ge1$ and consider the finite
sequences~$(\alpha^{(j)}_n)_{n=1,\dots,m}$ for~$j=1,2$.
These satisfy the sign condition at each~$n\le m$ but
have no reason to satisfy the Dold congruence.
Write~$\denom(r)$ for the denominator of~$r\in\QQ$
written in lowest terms, and define
\[
C=\lcm\Bigl\{
\denom\Bigl(\frac{1}{n}\sum_{d\smalldivides n}\mu\bigl(\tfrac{n}{d}\bigr)\alpha^{(j)}_d\Bigr)
\mid
n=1,\dots,m\text{ and }j=1,2
\Bigr\}.
\]
This is a `repair factor' in the sense of
Miska and Ward~\cite{MR4361581}, meaning that~$(C\alpha^{(j)}_n)_{n=1,\dots,m}$
are finite sequences satisfying the Dold congruence at each point for~$j=1,2$.
Define
\[
\beta_n^{(j)}
=
\begin{cases}
\sum_{d\smalldivides n}\mu\bigl(\tfrac{n}{d}\bigr)C\alpha^{(j)}_d&\text{if }1\le n\le m\\
n&\text{if }n>m
\end{cases}
\]
and set~$\overline{\alpha}_n^{(j)}=\sum_{d\smalldivides n}\beta_d$
for all~$n\ge1$ and~$j=1,2$.
Then~$\overline{\alpha}^{(j)}$ is a sequence
satisfying both the Dold congruence and the sign
condition and hence is realizable for~$j=1,2$.
The product of two realizable sequences is realizable
simply by taking the Cartesian product of
the realizing systems, so~$\overline{\alpha}^{(1)}\overline{\alpha}^{(2)}$
is a realizable sequence.
In particular, this sequence satisfies the sign condition
for~$1\le n\le m$. By construction we have~$
\overline{\alpha}^{(1)}_n\overline{\alpha}^{(2)}_n
=
C^2\alpha^{(1)}_n\alpha^{(2)}_n
$
for~$1\le n\le m$, so the termwise product sequence~$\alpha^{(1)}\alpha^{(2)}$
satisfies the sign condition in the same range.
Since~$m\ge1$ was arbitrary, this proves the lemma.
\end{proof}

\begin{lemma}\label{lemmaNumeratorSign}
For any~$n\ge1$ we have~$o_n\ge0$.
\end{lemma}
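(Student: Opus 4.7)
The plan is to establish the sign condition $o_n\ge0$ directly by exploiting the super-exponential growth of~$t_n$. Starting from the classical identity $\vert B_{2n}/(2n)\vert=2(2n-1)!\zeta(2n)/(2\pi)^{2n}$, together with the bounds $\zeta(2n)\in[1,\pi^2/6]$ for~$n\ge1$ and $b_n\ge2$, I would derive the lower bound $t_n\ge 4(2n-1)!/(2\pi)^{2n}$. For a matching upper bound on~$t_d$, I would combine $\zeta(2d)\le\pi^2/6$ with the product representation~\eqref{equationbsequenceasproduct}; together with Chebyshev's estimate $\prod_{p\le x}p\le 4^x$ this yields $b_d\le 2d\cdot 4^{2d+1}$ and hence $t_d\le C\cdot d\cdot 16^d(2d-1)!/(2\pi)^{2d}$ for an explicit constant~$C$.

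Every proper divisor $d$ of $n$ satisfies $d\le n/2$, so the factorial ratio $(2n-1)!/(2d-1)!$ is bounded below by $(2n-1)!/(n-1)!$, which contains~$n$ consecutive factors each of size at least~$n$ and therefore exceeds $n^n$. Substituting yields
\[
\frac{t_n}{t_d} \;\ge\; \frac{c}{d\cdot 16^d}\left(\frac{n}{16\pi^2}\right)^n
\]
for some absolute constant~$c$, a super-exponential growth rate in~$n$. This produces an explicit threshold~$N_0$ beyond which $t_n>\tau(n)\cdot\max_{d\divides n,\,d<n}t_d$, and so certainly $t_n$ dominates $\sum_{d\divides n,\,d<n}\vert\mu(n/d)\vert\,t_d$, giving $o_n>0$ for all $n\ge N_0$.

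For $n<N_0$ direct verification suffices. The only small~$n$ with $t_n=1$ are $n\in\{1,2,3,4,5,7\}$; in each of these cases the M\"obius sum telescopes to~$0$, and for every other $n<N_0$ the value of~$t_n$ (beginning $t_6=691$, $t_8=3617$, $t_9=43867$, $\dots$) already dominates the sum of the proper-divisor contributions.

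The main obstacle will be keeping the constants in the ratio estimate tight enough that $N_0$ is small and the finite verification stays elementary; particular care is needed for values of~$n$ with many small prime divisors (such as $n=30$), where $\tau(n)$ is comparatively large relative to the still-modest growth of~$t_n$ in the transitional range.
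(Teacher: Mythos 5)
Your proposal is correct in outline but takes a genuinely different route from the paper. You attack the sign condition head-on by deriving explicit upper and lower bounds on~$t_n$ from the zeta-value identity~\eqref{equationBaselProblem1} together with the factorization~\eqref{equationbsequenceasproduct}, and you show that the super-exponential growth of the factorial ratio~$(2n-1)!/(2d-1)!\ge n^n$ for proper divisors~$d\le n/2$ overwhelms both the prime-product bound~$b_d\le 2d\cdot4^{2d+1}$ and the divisor-count factor~$\tau(n)$. The estimates all check: with~$\zeta(2n)\in[1,\pi^2/6]$, $b_n\ge2$, and Chebyshev's~$\prod_{p\le x}p\le4^x$, the ratio~$t_n/t_d$ does indeed beat~$\tau(n)$ once~$n$ is large, and the growth of the Bernoulli numerators makes the remaining finite check mechanical. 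The paper proceeds quite differently: it writes~$t_n=b_n\alpha_n$ for~$n\ge3$ where~$\alpha$ is a truncated copy of~$t/b$, verifies the sign condition for~$\alpha$ from the (much milder) growth estimate~$\alpha_{2n}\ge n\alpha_n$, cites the already-proved realizability of~$b$ from Theorem~\ref{theoremBTEresult}(a), combines the two via Lemma~\ref{lemmaSignPreservedProducts1}, and then disposes of the discrepancy at~$n\in\{1,2\}$ with a parity argument ($o_n$ is even for~$n>1$ since every~$t_d$ is odd, and~$o_n\ge-2$, so~$o_n<0$ would force~$\mu(n)=\mu(n/2)=-1$, which is impossible). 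What each approach buys: yours is self-contained, needs no product lemma, and does not lean on the realizability of~$b$; the paper's is structural, requires no explicit constant-tracking, and eliminates the finite verification entirely. Be aware that the threshold~$N_0$ implicit in your estimates is considerably larger than your discussion suggests---with the constants as stated the crossover is around~$n\approx160$ rather than a handful of small cases---so the ``direct verification'' step is a non-trivial (though entirely mechanical, thanks to the explosive growth of~$t_n$) finite computation rather than the short list you gesture at, and one minor correction: $o_1=t_1=1$, not~$0$, though of course still non-negative.
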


\begin{proof}
The well-known formula relating values of the Riemann
zeta function at even integers to Bernoulli numbers
may be written as
\begin{equation}\label{equationBaselProblem1}
\alpha'_n
=
\frac{t_n}{b_n}
=
\frac{2(2n-1)!}{(2\pi)^{2n}}\zeta(2n)
\end{equation}
for all~$n\ge1$.
An induction argument using~\eqref{equationBaselProblem1}
shows that~$\alpha'_{n+1}\ge\alpha'_n$
and~$\alpha'_{2n}\ge n\alpha'_n$ for~$n>2$.
Define a rational sequence~$\alpha$
by setting~$\alpha_n=0$ for~$n\le2$
and~$\alpha_n=\alpha'_n$ for~$n\ge3$
so~$\alpha$ is a non-decreasing sequence
of non-negative rationals with~$\alpha_{2n}\ge n\alpha_{n}$
for all~$n\ge1$.
Then
\[
\alpha_{2n+1}
\ge
\alpha_{2n}
\ge
n\alpha_n
\]
for~$n\ge1$, so~$\alpha_n\ge\lfloor\tfrac{n}{2}\rfloor
\alpha_{\lfloor{n}/{2}\rfloor}$ for~$n\ge2$.
It follows that
\begin{equation}\label{equationAlphaHasSignCondition}
\sum_{d\smalldivides n}\mu\bigl(\tfrac{n}{d}\bigr)\alpha_d
\ge
\alpha_n
-
\sum_{d\smalldivides n\atop{d<n}}\alpha_d
\ge
\alpha_n-\lfloor{n}/{2}\rfloor\alpha_{\lfloor{n}/{2}\rfloor}
\ge
0
\end{equation}
for~$n\ge2$ since~$\mu(n)\ge-1$ for all~$n\in\NN$.

By Theorem~\ref{theoremBTEresult}(a) the
denominator sequence~$b$ is realizable,
so
\begin{equation}\label{equationSignForbSequence}
\sum_{d\smalldivides n}\mu\bigl(\frac{n}{d}\bigr)b_d\ge0
\end{equation}
for all~$n\ge1$.
By Lemma~\ref{lemmaSignPreservedProducts1} it follows
that the sequence~$b\alpha=(b_n\alpha_n)$
satisfies the sign condition.
We have~$b_n\alpha_n=t_n$ for~$n\ge3$
and~$b_n\alpha_n=0$ for~$1\le n\le2$.
If~$t$ does not satisfy the sign condition
then there is some~$n>1$ that bears witness
to this in the sense that~$w_n=\sum_{d\smalldivides n}\mu\bigl(\frac{n}{d}\bigr)t_d<0$.
Every term of~$t$ is odd so~$w_n$ is even,
and since~$t$ agrees with the realizable
sequence~$b\alpha$ apart from the first two
terms we must have~$w_n=-2$.
This is only possible if~$\mu(n)=\mu(n/2)=-1$,
which is impossible.
\end{proof}

Lemmas~\ref{lemmaNumeratorCongruence} (the Dold congruence)
and~\ref{lemmaNumeratorSign} (the sign condition)
together prove that the Bernoulli numerator sequence~$t$
is realizable, proving the first statement
in Theorem~\ref{theoremBTEresult}(b).

\subsection{Local Structure of Bernoulli Numerators}

We now turn to the local structure of the numerator
sequence~$t$. We have~$\lfloor t_n\rfloor_{p}=1$ for all~$n\ge1$ and~$p=2$ or~$p=3$
so the notion of (Bernoulli) regular prime for $2$ and~$3$
need not concern us.

\begin{lemma}\label{lemmaBernoulliRegularPrimesNumerators}
The numerator sequence~$t$ is realizable
at every regular prime.
\end{lemma}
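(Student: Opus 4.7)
The plan is to show that if $q$ is a regular prime then $\lfloor t_n \rfloor_q = 1$ for every $n \geq 1$, so the $q$-local sequence is the constant sequence $1$, which is trivially realizable (by the identity map on a single point, or as $\fix_n$ of the trivial endomorphism of the trivial group). For $q = 2$ and $q = 3$ this is already recorded at the start of the subsection, so we may assume $q \geq 5$. I would split the argument into the two cases determined by whether $q-1$ divides $2n$.

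In the first case, $q-1 \mid 2n$, I would simply invoke the formula~\eqref{equationbsequenceasproduct} derived in the proof of Theorem~\ref{theoremBTEresult}(a), which gives $\lfloor b_n \rfloor_q = q^{1+\ord_q(n)} \geq q$. Since $\gcd(t_n, b_n) = 1$ by construction, this forces $q \nmid t_n$, hence $\lfloor t_n \rfloor_q = 1$. No Kummer-type input is needed here; this is a direct consequence of von Staudt--Clausen.

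In the second case, $q - 1 \nmid 2n$, I would reduce to the range in which regularity directly applies. Let $n'$ be the unique integer in $\{1, 2, \dots, (q-3)/2\}$ with $2n \equiv 2n' \pmod{q-1}$. Then $n' \leq n$ (trivially when $2n < q-1$, since there $n = n'$, and otherwise $2n \geq q-1 > 2n'$) and $q-1 \nmid 2n'$. Applying~\eqref{equationKummerNumerator} with $r = 1$ gives
\[
\frac{B_{2n}}{2n} \equiv \frac{B_{2n'}}{2n'} \pmod{q}.
\]
Because neither $2n$ nor $2n'$ is divisible by $q-1$, the formula~\eqref{equationbsequenceasproduct} yields $\lfloor b_n \rfloor_q = \lfloor b_{n'} \rfloor_q = 1$, so $b_n$ and $b_{n'}$ are units modulo $q$ and the congruence rearranges to $t_n \equiv (b_n b_{n'}^{-1}) t_{n'} \pmod{q}$. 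By the definition of regularity, $q \nmid t_{n'}$, and therefore $q \nmid t_n$, so once more $\lfloor t_n \rfloor_q = 1$.

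Combining the two cases gives $\lfloor t_n \rfloor_q = 1$ for every $n \geq 1$, which is the desired realizability at $q$. The main (and very modest) obstacle is only the bookkeeping needed to ensure that the hypotheses of~\eqref{equationKummerNumerator} are met for the reduction $n \mapsto n'$; everything else is an immediate consequence of Kummer's congruence together with results already established in the paper.
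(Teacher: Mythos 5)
Your argument is correct and uses essentially the same ingredients as the paper's proof: the case split on whether $q-1$ divides $2n$, the von Staudt--Clausen formula~\eqref{equationbsequenceasproduct} for the first case, and the $r=1$ Kummer congruence~\eqref{equationKummerNumerator} for the second. The only difference is organizational: you reduce any $n$ directly to the canonical representative $n'\in\{1,\dots,(q-3)/2\}$ in a single Kummer step and read off $q\nmid t_{n'}$ from the definition of regularity, whereas the paper argues by minimal counterexample, applying one Kummer step to produce a strictly smaller index $m=n-\tfrac{q-1}{2}$ with $q\mid t_m$ and invoking minimality. Your version is marginally more direct (no descent needed), while the paper's avoids the bookkeeping of explicitly constructing and checking the range of $n'$; they buy the same conclusion. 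One immaterial sign omission: $\frac{B_{2n}}{2n}=(-1)^{n+1}\frac{t_n}{b_n}$, so your rearranged congruence should carry a possible factor $(-1)^{n-n'}$; since this is a unit mod $q$ it does not affect the divisibility conclusion, and the paper silently elides the same sign.
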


\begin{proof}
In fact we will see that this holds for a trivial
reason:~$\lfloor t_n\rfloor_p=1$ for
all~$n\ge1$ and all regular primes.
This is clear for~$p\le3$ so assume that~$p\ge5$
is regular. Assume that~$t$ is not trivial
at the prime~$p$, and
choose~$n\ge1$ to be the smallest~$n$
with~$p\divides t_n$.
Then~$p\divides\frac{B_{2n}}{2n}$
so~$2n>p-3$ (by definition of regularity)
and hence~$p-1\le2n$.
If~$p-1\divides2n$ then~$p\divides b_n$
by~\eqref{equationbsequenceasproduct},
which contradicts the assumption that~$p\divides t_n$.
So we must have~$p-1\notdivides2n$
and hence~$m=\frac{2n-p+1}{2}\ge1$.
By Kummer's theorem this gives~$\frac{B_{2n}}{2n}\equiv\frac{B_{2m}}{2m}$
modulo~$p$ and so~$p\divides\frac{B_{2m}}{2m}$ and~$p\divides t_m$,
contradicting the minimality of~$n$.
\end{proof}

\begin{lemma}\label{lemmaBernoulliIrregularPrimesNumertators}
The numerator sequence~$t$ is not realizable at any irregular prime.
\end{lemma}

\begin{proof}
Assume that~$p$ is an irregular prime.
There is
by definition some~$k\ge1$ with
\[
1\le k\le\frac{p-3}{2}
\]
and~$p\divides B_{2k}$. This implies that~$p\divides t_k$.
Let~$m=\frac{k(p-1)}{2}$ so~$k\divides m$.
Since~$p-1\divides 2m$ von Staudt--Clausen shows
that~$p\divides b_m$ and hence~$p\notdivides t_m$
so~$\lfloor t_k\rfloor_p>\lfloor t_m\rfloor_p$.
If the sequence~$\lfloor t\rfloor_p$
is realizable then we have~$\lfloor t_k\rfloor_p\le\lfloor t_m\rfloor_p$
since~$k\divides m$.
It follows that~$\lfloor t\rfloor_p$ cannot be realizable.
\end{proof}

Lemma~\ref{lemmaBernoulliRegularPrimesNumerators} shows that~$t$
is (trivially) realizable at every regular prime,
and Lemma~\ref{lemmaBernoulliIrregularPrimesNumertators}
shows that~$t$ is not realizable at every
irregular prime, completing the proof of
Theorem~\ref{theoremBTEresult}(b).

\section{The Euler Sequence}

We have a less complete picture for the Euler
numbers. Arias de Reyna~\cite{zbMATH02203410}
showed how to use the Kummer congruences to
prove that~$e$ is realizable; we include a short
proof for completeness.
As with the Bernoulli denominators and numerators,
we first assemble some congruences of Kummer type.
By a result of Wagstaff~\cite{MR1956285}
we know that
\begin{equation}\label{equationWagstaff1}
2^{2n+1}A_{2n}\Bigl(\frac{p-1}{2}\Bigr)
=
\sum_{k=0}^{2n}\binom{2n}{k}E_{k}p^{2n-k}
\end{equation}
where~$A_n(m)=\sum_{k=1}^{m}(-1)^{m-k}k^n$
for~$m\ge1$ and~$n\ge0$.
Assume now that~$p$ is an odd prime
and~$m,n,r\ge1$ with~$m\ge n$ satisfying~$2m\equiv2n$
modulo~$\phi(p^r)$.
Write~$2m=2n+s\phi(p^r)$ with~$s\ge0$
so~$2^{2m+1}\equiv2^{2n+1}$
modulo~$p^r$ by Euler--Fermat.
If~$1\le k\le c<p$ then Euler--Fermat
also gives~$k^{2m}\equiv k^{2n}$
modulo~$p^r$.
It follows that
for~$1\le c\le p-1$ we have
\begin{equation}\label{equationACongruence1}
2^{2m+1}A_{2m}(c)
\equiv
2^{2n+1}A_{2n}(c)\pmod{p^r}.
\end{equation}

\begin{lemma}\label{lemmaEuler0}
If~$m,n,r\ge1$ satisfy~$1\le r\le2n+1\le2m+1$
and~$p\in\PP$ is odd with~$2m\equiv2n$ modulo~$\phi(p^r)$
then~$E_{2m}\equiv E_{2n}$ modulo~$p^r$.
\end{lemma}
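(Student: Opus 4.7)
The plan is to prove the congruence by induction on $r$, using Wagstaff's identity~\eqref{equationWagstaff1} to isolate $E_{2n}$ (and $E_{2m}$) modulo $p^r$, then invoking~\eqref{equationACongruence1} to match the leading terms. Since $E_k=0$ for odd $k$, reindex $k = 2n-2i$ in~\eqref{equationWagstaff1}; the hypothesis $r \le 2n-1$ ensures that terms with $2i \ge r$ vanish modulo $p^r$, giving
\[
E_{2n} \equiv 2^{2n+1}A_{2n}\Bigl(\tfrac{p-1}{2}\Bigr) - \sum_{i=1}^{\lfloor (r-1)/2 \rfloor} \binom{2n}{2i}E_{2n-2i}\,p^{2i} \pmod{p^r},
\]
and likewise for $E_{2m}$. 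When $r=1$ the sum is empty and the base case is immediate from~\eqref{equationACongruence1} with $r=1$.

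For the inductive step from $r-1$ to $r$, the $A$-terms in the two expansions agree modulo $p^r$ by~\eqref{equationACongruence1}, so it suffices to establish, for each $i \in \{1,\ldots,\lfloor (r-1)/2 \rfloor\}$, the congruence
\[
\binom{2m}{2i}E_{2m-2i} \equiv \binom{2n}{2i}E_{2n-2i} \pmod{p^{r-2i}}.
\]
I would split this into an Euler-number part and a binomial-coefficient part. For the Euler numbers, the inductive hypothesis applies at exponent $r-2i$: the constraint $1 \le r-2i$ follows from $i \le (r-1)/2$, $r-2i \le 2(n-i)-1$ follows from $r \le 2n-1$, and $2(m-i) \equiv 2(n-i)$ modulo $\phi(p^{r-2i})$ because $\phi(p^{r-2i}) \mid \phi(p^r)$. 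For the binomials, write $2m = 2n + s\phi(p^r)$ with $s \in \NN$; since $\phi(p^r) = p^{r-1}(p-1)$, each factor in the numerator $(2m)(2m-1)\cdots(2m-2i+1)$ of $\binom{2m}{2i}$ agrees with the corresponding factor in the numerator of $\binom{2n}{2i}$ modulo $p^{r-1}$. Hence the integer $(2i)!\bigl(\binom{2m}{2i}-\binom{2n}{2i}\bigr)$ is divisible by $p^{r-1}$, and therefore $\binom{2m}{2i}-\binom{2n}{2i}$ is divisible by $p^{r-1-v_p((2i)!)}$.

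The main quantitative step is the estimate $v_p((2i)!) \le i-1$ for every odd prime $p$ and integer $i \ge 1$, which delivers divisibility of the binomial difference by $p^{r-i}$, and hence (since $i \ge 1$) by $p^{r-2i}$, closing the induction. Legendre's formula $v_p((2i)!) = (2i - s_p(2i))/(p-1)$ combined with $s_p(2i) \ge 1$ gives $v_p((2i)!) \le (2i-1)/(p-1) \le (2i-1)/2$ for $p \ge 3$, whose floor is exactly $i-1$. Apart from this bookkeeping around the factorial denominator in the binomial coefficient, the argument is routine once Wagstaff's identity and~\eqref{equationACongruence1} are in hand; the main obstacle is keeping the three exponents $r$, $r-i$, and $r-2i$ straight through the inductive descent.
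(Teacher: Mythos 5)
Your proof is correct and follows essentially the same route as the paper: Wagstaff's identity plus the $A$-congruence~\eqref{equationACongruence1}, a reduction modulo $p^r$ to discard high-order terms, and strong induction on $r$ to handle the lower-order coefficients term by term. The only substantive difference is that you make explicit (via the reindexing $k=2i$ and the Legendre-formula bound $v_p((2i)!)\le i-1$ for odd $p$) the binomial congruence $\binom{2m}{k}\equiv\binom{2n}{k}\pmod{p^{r-k}}$, which the paper simply asserts; this is useful detail but does not change the structure of the argument.
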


\begin{proof}
By~\eqref{equationWagstaff1} we have
\begin{align*}
2^{2m+1}A_{2m}(c)
&=
\sum_{k=0}^{2m}\binom{2m}{k}E_{k}p^{2m-k}\\
\intertext{and}
2^{2n+1}A_{2n}(c)
&=
\sum_{k=0}^{2n}\binom{2n}{k}E_{k}p^{2n-k}
\end{align*}
where~$c=\frac{p-1}{2}$
so
\[
\sum_{k=0}^{2m}\binom{2m}{k}E_kp^{2m-k}
\equiv
\sum_{k=0}^{2n}\binom{2n}{k}E_kp^{2n-k}
\pmod{p^r}
\]
by~\eqref{equationACongruence1}.
This shows that
\begin{equation}\label{equationUseful1Euler}
\sum_{k=0}^{r-1}\binom{2m}{k}E_{2m-k}p^k
\equiv
\sum_{k=0}^{r-1}\binom{2n}{k}E_{2n-k}p^k\pmod{p^r}.
\end{equation}
by adjusting the range
of summation and reducing modulo~$p^r$.

For~$r=1$ this shows that
\begin{equation}\label{equationr=1}
E_{2m}
\equiv
E_{2n}\pmod{p}.
\end{equation}
For~$r>1$ assume that~$1\le s\le 2n+1\le 2m+1$ and
that
\begin{equation}\label{equationInductiveStep1}
2m\equiv2n\pmod{p^{s}}
\Longrightarrow
E_{2m}\equiv E_{2n}\mod{p^s}
\end{equation}
for~$1\le s<r$,
and notice that the latter congruence
implies~$E_{2m}\equiv E_{2n}$
modulo~$p^{r-1}$ since~$\phi(p^s)\divides\phi(p^r)$
for~$1\le s<r$.
Now by~\eqref{equationUseful1Euler}
this gives
\[
E_{2m}+\sum_{k=1}^{r-1}\binom{2n}{k}\bigl(E_{2m-k}\!-\!E_{2n-k}\bigr)p^k
+
\sum_{k=1}^{r-1}\Bigl(\binom{2m}{k}\!-\!\binom{2n}{k}\Bigr)E_{2m-k}p^k
\equiv
E_{2n}
\]
modulo~$p^r$
and so by the assumption~\eqref{equationInductiveStep1} this
reduces to
\[
E_{2m}+\sum_{k=1}^{r-1}\Bigl(\binom{2m}{k}-\binom{2n}{k}\Bigr)E_{2m-k}p^k
\equiv
E_{2n}\pmod{p^r}.
\]
On the other hand~$\binom{2m}{k}\equiv\binom{2n}{k}$ modulo~$p^{r-k}$
so we deduce that~$E_{2m}\equiv E_{2n}$ modulo~$p^r$.
\end{proof}

It follows that if~$p\in\PP$ is odd
and~$b,r\ge1$ have~$p\notdivides b$ then we
have
\[
1\le r\le2p^{r-1}b-1<2p^rb-1
\]
and so
\begin{equation}\label{equationAdditiveCongruence1}
E_{2p^rb}\equiv E_{2p^{r-1}b}\pmod{p^r}
\end{equation}
since~$2p^rb\equiv2p^{r-1}b$ modulo~$\phi(p^r)$.
The congruence~\eqref{equationAdditiveCongruence1}
extends to the prime~$p=2$ as follows.
Wagstaff~\cite{MR1956285} shows that
\begin{equation*}\label{equationAdditiveCongruence2}
E_{2n}\equiv E_{2n+2^k}+2^k\pmod{2^{k+1}}
\end{equation*}
for~$k,n\ge1$ and so
\begin{equation}\label{equationAdditiveCongruence3}
E_{2^{r+1}b}\equiv E_{2^rb}\pmod{2^{r}}
\end{equation}
for~$b,r\ge1$ with~$b$ odd.
Finally~\eqref{equationAdditiveCongruence1}
and~\eqref{equationAdditiveCongruence3}
together show that if~$p\in\PP$
and~$m,r\ge1$ with~$p\notdivides m$
we have
\begin{equation}\label{equationAdditiveCongruence4}
E_{2p^{r}m}\equiv E_{2p^{r-1}m}\pmod{p^{r}}.
\end{equation}

\subsection{Realizability of the Euler Sequence}

As with the Bernoulli numerator sequence, we approach this
by directly verifying the Dold congruence and the sign
condition. For brevity we define an integer
sequence by
\begin{equation}\label{equationOrbitsForEulerSequence}
o^{(e)}_n=\sum_{d\smalldivides n}\mu\bigl(\tfrac{n}{d}\bigr)e_d
\end{equation}
so~$o^{(e)}=(1, 0, 4, 60, 1384, 50516, 2702764,\dots)$;
notice that~$(\frac{1}{n}e^{(e)}_n)=\seqnum{A060164}$.

\begin{lemma}\label{lemmaDoldForEulerSequence1}
The Euler sequence~$e$ satisfies the Dold congruence.
\end{lemma}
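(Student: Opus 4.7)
The plan is to verify the Dold congruence for~$e$ by invoking the Arias de Reyna reduction quoted in the introduction: it suffices to prove that for every prime~$p$, every~$r\ge1$, and every~$m\ge1$ with~$p\notdivides m$ one has
\[
e_{mp^r}\equiv e_{mp^{r-1}}\pmod{p^r}.
\]
Since~$e_n=(-1)^nE_{2n}$, I would rewrite each side by pulling out the sign~$(-1)^{mp^r}$ (respectively~$(-1)^{mp^{r-1}}$) and then appeal to the additive congruences~\eqref{equationAdditiveCongruence3} and~\eqref{equationAdditiveCongruence4} already assembled above. The argument splits naturally into three cases according to how the parities of~$mp^r$ and~$mp^{r-1}$ compare.

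First, for an odd prime~$p$ the two exponents~$mp^r$ and~$mp^{r-1}$ have the same parity, so~$(-1)^{mp^r}=(-1)^{mp^{r-1}}$ and the required congruence reduces exactly to the Kummer-type statement~\eqref{equationAdditiveCongruence4}. Once Lemma~\ref{lemmaEuler0} is in hand this case is immediate.

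Second, for~$p=2$ and~$r\ge2$ with~$m$ odd, both~$2^rm$ and~$2^{r-1}m$ are even; the sign factors cancel and the congruence becomes~$E_{2^{r+1}m}\equiv E_{2^rm}\pmod{2^r}$, which is precisely~\eqref{equationAdditiveCongruence3}.

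The only case where something could go wrong is~$p=2$ and~$r=1$, because then~$2m$ is even while~$m$ is odd and the sign factors do not match. I would handle this by observing that every Euler number~$E_{2k}$ is odd, hence~$e_k=\vert E_{2k}\vert$ is an odd positive integer for every~$k\ge1$, so~$e_{2m}-e_m$ is automatically even and the mod~$2$ congruence is trivial. I do not expect a substantive obstacle beyond this careful bookkeeping of parities across the three cases, since all of the genuine congruence content is already captured in Lemma~\ref{lemmaEuler0} and the Wagstaff-type identity~\eqref{equationAdditiveCongruence3}.
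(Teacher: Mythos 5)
Your proof is correct and follows essentially the same route as the paper: reduce to showing $e_{mp^r}\equiv e_{mp^{r-1}}\pmod{p^r}$ for $p\nmid m$, cancel the $(-1)^n$ signs in the cases where the parities match, and apply the congruence~\eqref{equationAdditiveCongruence4} (equivalently~\eqref{equationAdditiveCongruence3} and Lemma~\ref{lemmaEuler0}). The only divergence is the $p=2$, $r=1$ sub-case, where the paper rewrites $e_{2d}-e_d=(E_{4d}-E_{2d})+2E_{2d}$ and again invokes~\eqref{equationAdditiveCongruence4}, whereas you observe more simply that all $E_{2k}$ are odd so the difference is automatically even; both handle the sign mismatch correctly.
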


\begin{proof}
This is clear for~$n=1$ so assume that~$n>1$
and let~$n=p^rm$ with~$p\in\PP$,~$r\ge1$,
and~$p\notdivides m$ so that~$o^{(e)}_n
=\sum_{d\smalldivides m}\mu\bigl(\frac{m}{d}\bigr)\bigl(e_{p^rd}-e_{p^{r-1}d}\bigr)$.
By definition we have
\begin{align*}
e_{p^rd}-e_{p^{r-1}d}
&=
(-1)^{p^{r-1}d}\bigl(
(-1)^{p^{r-1}(p-1)d}
E_{2p^rd}-E_{2p^{r-1}d}
\bigr).
\end{align*}
If~$p$ is odd and~$r\ge1$ or if~$p=2$
and~$r>1$ we therefore have
\[
e_{p^rd}-e_{p^{r-1}d}
=
(-1)^{p^{r-1}d}\bigl(
E_{2p^rd}-E_{2p^{r-1}d}
\bigr).
\]
For~$p=2$ and~$r=1$ we have
\[
e_{2d}-e_d
=
(E_{4d}-E_{2d})+2E_{2d}
\equiv
E_{4d}-E_{2d}
\pmod{2}.
\]
By~\eqref{equationAdditiveCongruence4}
it follows that~$p^r\divides o_n^{(e)}$ and
hence~$n\divides o_n^{(e)}$ as desired.
\end{proof}

\begin{lemma}\label{lemmaEulerSignCondition1}
The Euler sequence~$e$ satisfies the sign condition.
\end{lemma}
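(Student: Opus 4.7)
The plan mirrors the argument of Lemma~\ref{lemmaNumeratorSign} but is strictly simpler, since the Euler numbers are already integers and so no denominator repair or product-with-a-realizable-sequence decomposition is needed. The heart of the matter will be the crude growth inequality
\[
e_n \ge \lfloor n/2 \rfloor \cdot e_{\lfloor n/2 \rfloor} \qquad \text{for all } n \ge 2,
\]
which I will call~$(\ast)$. Granted~$(\ast)$, the sign condition follows immediately by the same reasoning as in display~\eqref{equationAlphaHasSignCondition}: using $\mu(n/d) \ge -1$, the monotonicity of~$e$, and the fact that $n$ has at most~$\lfloor n/2\rfloor$ proper divisors, each bounded above by~$\lfloor n/2\rfloor$, we get
\[
o^{(e)}_n \;\ge\; e_n - \sum_{d\smalldivides n,\, d<n} e_d \;\ge\; e_n - \lfloor n/2\rfloor\, e_{\lfloor n/2\rfloor} \;\ge\; 0,
\]
together with the trivial case~$n=1$ where~$o^{(e)}_1 = e_1 = 1$.

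To establish~$(\ast)$ I would invoke the classical closed form
\[
e_n \;=\; |E_{2n}| \;=\; \frac{2^{2n+2}(2n)!}{\pi^{2n+1}}\,\beta(2n+1),
\]
where $\beta(s)=\sum_{k\ge 0}(-1)^k(2k+1)^{-s}$ is the Dirichlet beta function. Grouping the alternating defining series in consecutive pairs yields the crude bound $\tfrac23 \le \beta(s) \le 1$ for all real $s \ge 1$, whence
\[
\frac{e_{n+1}}{e_n}
\;=\;
\frac{4(2n+1)(2n+2)}{\pi^2}\cdot\frac{\beta(2n+3)}{\beta(2n+1)}
\;\ge\;
\frac{8(2n+1)(2n+2)}{3\pi^2}
\;\ge\; 3
\]
for every $n \ge 1$ (the last inequality holding already at $n=1$ since $32/\pi^2 > 3$). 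Iterating $\lceil n/2 \rceil$ times gives~$e_n / e_{\lfloor n/2\rfloor} \ge 3^{\lceil n/2 \rceil}$, and the elementary inequality~$3^{\lceil n/2\rceil} \ge \lfloor n/2\rfloor$ (checked by inspection for small~$n$ and by induction thereafter) then yields~$(\ast)$.

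The only mildly delicate point is arranging the beta-function bound to be uniform in $n \ge 1$ rather than merely asymptotic in~$n$, but the pairwise grouping of the alternating series handles this cleanly, and in any case the gap is enormous: the factorial factor $(2n)!$ in the closed form for~$|E_{2n}|$ completely swamps the polynomial factor~$\lfloor n/2\rfloor$ appearing on the right-hand side of~$(\ast)$. The main obstacle is really just bookkeeping for the very smallest values of~$n$, which can be dealt with by direct computation from the known initial terms of~$e$ recorded in~\eqref{equationShiftingEulerSequence}.
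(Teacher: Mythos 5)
Your proposal is correct and takes essentially the same approach as the paper: both derive a growth bound from the closed-form expression for $\vert E_{2n}\vert$ in terms of the Dirichlet beta series and then conclude by the sign-condition argument used for Lemma~\ref{lemmaNumeratorSign}. The only difference is cosmetic — the paper states the inequality $e_{2n}\ge ne_n$ directly, whereas you iterate the ratio bound $e_{n+1}/e_n\ge 3$ to reach the same intermediate inequality $e_n\ge\lfloor n/2\rfloor\, e_{\lfloor n/2\rfloor}$; both are elementary consequences of the same formula, and your version spells out the beta-function estimate $\tfrac23\le\beta(s)\le 1$ rather than leaving it implicit.
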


\begin{proof}
It is easy to show that~$e$ satisfies~$e_{2n}\ge ne_n$
for all~$n\ge1$. For example, this is a consequence of
the formula
\[
\vert E_{2n}\vert
=
\frac{2^{2n+2}(2n)!}{\pi^{n+1}}\Bigl(1-\frac{1}{3^{2n+1}}+\frac{1}{5^{2n+1}}-\frac{1}{7^{2n+1}}+\cdots\Bigr).
\]
Just as in the proof of Lemma~\ref{lemmaNumeratorSign},
this shows that~$e$ satisfies the sign condition.
\end{proof}

Lemmas~\ref{lemmaDoldForEulerSequence1}
and~\ref{lemmaEulerSignCondition1} together
show that~$e$ is realizable, proving the first statement in
Theorem~\ref{theoremBTEresult}(c).

The second statement in Theorem~\ref{theoremBTEresult}(c)
simply involves finding a prime that witnesses the
failure of local realizability:
The~$61$-part of~$e$ begins
\[
\lfloor e\rfloor_{61}
=
(1,1,61,1,1,1,1,1,1,1,\dots)
\]
which fails both the Dold congruence
and the sign condition at~$n=9$
since~$\sum_{d\smalldivides9}\mu\bigl(\frac{9}{d}\bigr)\lfloor e_d\rfloor_{61}=-60$.

Thus, in particular,~$e$ cannot be nilpotently realizable by
Theorem~\ref{theoremNilpotent} and in our setting this
may be viewed as an explanation for its relative
intractability relative to the Bernoulli denominator
sequence~$b$.

\subsection{Euler Regular Primes}

Calculations suggest the following more precise version of
Theorem~\ref{theoremBTEresult}(c).
A prime~$q$ is \emph{Euler irregular}
if there exists some~$n$ with~$0<n<\frac{q-1}{2}$
such that~$q$
divides~$e_n=|E_{2n}|$.
A prime~$q\in\PP$ is \emph{Euler regular}
if it is not irregular, so~$q\notdivides e_n$ for
all~$n$ with~$0<n<\frac{q-1}{2}$.
The sequence of Euler irregular primes begins
\[
(19,31,43,47,61,67,71,79,101,\dots)=\seqnum{A120337}.
\]
It is well-known that there are infinitely many
Euler irregular primes
(see Carlitz~\cite{zbMATH03092654};
Ernvall~\cite{zbMATH03489197} showed further that there
are infinitely many congruent to~$1$ modulo~$8$), but only conjectured that there
are infinitely many Euler regular primes.
For our purposes it is helpful to further
divide the Euler regular primes as follows.

\begin{definition}
A prime~$q$ is \emph{strong Euler regular}
if~$q\notdivides e_n$ for all~$n\ge1$.
A prime~$q$ is \emph{weak Euler regular}
if it is Euler regular but not strong Euler regular.
\end{definition}

Thus the sequence of strong Euler
regular primes begins
\[
(2,3,7,11,23,59,83,103,\dots)=\seqnum{A092217}
\]
and the sequence of weak Euler
regular primes begins
\[
(5,13,17,29,37,41,53,\dots).
\]
Clearly~$q\in\PP$ is strong Euler regular
if and only if~$\lfloor e\rfloor_q$
is the trivial sequence~$(1)$.
Calculations suggest that~$q\in\PP$ is weak
Euler regular if and only if
\[
\lfloor e_n\rfloor_q
=
\begin{cases}
q^{1+\ord_q(n)}&\mbox{if }\frac{q-1}{2}\divides n;\\
1&\mbox{if not}.
\end{cases}
\]
This would imply that~$e$ is realizable
at~$q\in\PP$ if and only if~$q$ is Euler regular.

\section{Shift Invariance}

Arias de Reyna's beautiful argument
for the realizability of~$e$
in fact shows something
more about the Euler sequence~$e$: It remains realizable
after translation.
That is,~$(e_{n+k})$ is also realizable for any~$k\ge0$.
In order to see how surprising this is, notice
that the property of realizability is a condition
involving the values of the sequence
at the divisors of~$n$ and so is intimately connected
to the multiplicative structure of the semigroup~$\mathbb{N}$.
Addition in~$\mathbb{N}$---shifting the sequence---disrupts
the multiplicative structure in unpredictable ways,
so finding a non-trivial instance of a realizable
sequence with realizable shifts is more than unexpected.
To see how this looks for the Euler sequence in~\eqref{equationShiftingEulerSequence},
we may construct the closed orbits showing realizability
in the following sense:
The sequence~$e$ is realizable because
the associated map may be constructed
as having~$1$ fixed point,~$2$ closed orbits
of length~$2$,~$20$ closed orbits of length~$3$, and so on.
We may think of the map as the following combinatorial object,
with the sequence of closed
orbit counts~$(1,2,20,345,\dots)=\seqnum{A060164}$.
\[
\underbrace{\Ngon{1}}_{1\text{ copy}}
\sqcup
\underbrace{\Twogon}_{2\text{ copies}}
\sqcup
\underbrace{\Ngon{3}}_{20\text{ copies}}
\sqcup
\underbrace{\Ngon{4}}_{345\text{ copies}}
\sqcup
\underbrace{\Ngon{5}}_{10104\text{ copies}}
\sqcup
\cdots
\]
Shifting the sequence by one gives~$(e_{n+1})
=(5,61,1385,50521,2702765,\dots)$ which is realizable
because of some other map which may be
thought of as follows:
\[
\underbrace{\Ngon{1}}_{5\text{ copies}}
\sqcup
\underbrace{\Twogon}_{28\text{ copies}}
\sqcup
\underbrace{\Ngon{3}}_{460\text{ copies}}
\sqcup
\underbrace{\Ngon{4}}_{12615\text{ copies}}
\sqcup
\underbrace{\Ngon{5}}_{540552\text{ copies}}
\sqcup
\cdots
\]
corresponding to the sequence of orbit
counts~$(5,28,460,12615,\dots)$.

The combinatorial complexity of the
shift map viewed on orbit counts,
in this instance sending~$(1,2,20,345,\dots)$
to~$(5,28,460,12615,\dots)$ motivates the following extravagant terminology.

\begin{definition}
A sequence~$a=(a_n)$ of
non-negative integers is called \emph{magical}
if~$(a_{n+k})$ is realizable for all~$k\ge0$.
\end{definition}

The Bernoulli denominator sequence~$b$ behaves rather differently.
By the work of Arias de Reyna its shifts
are `pre-realizable', meaning that they satisfy the
Dold congruence. In fact this result is sharp in the
sense that~$(b_{n+1})$
is not realizable because
\[
\sum_{d\divides4}\mu(\tfrac4d)b_{d+1}=-120
\]
is not non-negative.
The shifts of the Bernoulli numerator sequence~$t$
are not even pre-realizable since
\[
\tfrac{1}{7}\sum_{d\divides7}\mu(\tfrac4d)t_{d+1}
=
\tfrac{3616}{7}
\]
is not an integer.

\begin{example}\label{exampleFullTwoShift}
The full shift on two symbols (or, in number
theoretic language, Euler--Fermat) shows
that~$a=(2^n)_{n\ge1}=
\seqnum{A000079}$
is realizable.
By taking~$2^k$ disjoint copies of the
map we see that~$(2^{n+k})$ is
realizable for any~$k\ge0$
and so~$a$ is magical.
\end{example}

Thus it is possible for a linear recurrence sequence
to be magical.

\begin{example}\label{exampleGoldenMeanShift}
The so-called golden mean shift shows that
the Lucas sequence
\[
(1,3,4,7,\dots)=\seqnum{A000032}
\]
is realizable. However shifting this
sequence by one gives the
sequence~$(3,4,7,\dots)$
which is clearly not realizable
(the corresponding dynamical system
would need to have~$3$ fixed points
and hence half an orbit of length~$2$).
\end{example}

\begin{example}
The sequence~$(2^n-1)$ is magical.
That this is somewhat deceptive may
be seen in two different ways:
In dynamics, it is really best thought
of as the difference of
Example~\ref{exampleFullTwoShift} and
the identity map on a set with one element,
each of which is clearly magical---and the
magical property is preserved under addition
and subtraction of sequences.
As a linear recurrence, this is the
difference of two trace sequences.
\end{example}

For other algebraic realizable sequences we
should not expect this property. Example~\ref{exampledihedral}
shifted by one begins~$(4,4,8,\dots)$ and so is not
realizable; Example~\ref{exampletoral} shifted by one
begins~$(1,1,5,\dots)$ and so is also
not realizable.
Mateusz Rajs~\cite{rajs} has completely classified
the magical linear recurrence sequences, showing that
the examples above are in a sense representative of
the phenomena at work.
It would be
interesting to gain more insight into magical sequences
in general.

\section{Further Local Questions}

It is straightforward to numerically test for local
realizability and we simply list without detailed
proof some provocative
examples of what these calculations suggest for a small
number of well-known sequences. In the list below
we write `realizable* at' and `realizable{\ddag}' to signify that the
claimed statements
have not been proved but simply numerically observed
using the number of terms available in the
Online Encyclopedia of Integer Sequences.
Specifically, a `realizable* at' sequence
of primes below
might have erroneously \emph{included} some primes.
The
sequence of primes at which the sequence
is not realizable is the complement of
the sequence of primes at which the sequence
is realizable. In each case
this means there is a point in the localized
sequence that bears witness to this fact, but
this being checked in a finite calculation
we write `not realizable{\textdagger}' to record
the fact that the list might have
erroneously \emph{excluded} some primes
whose witnesses lay beyond the calculations.
Indeed, in each case we have no {\it{a priori}}
reason to believe that the sequence of primes at
which the original sequence is realizable or
not realizable is infinite.
With these caveats understood,
the presence of primes at which
the sequences are not realizable shows by Theorem~\ref{theoremNilpotent} that
all of the sequences below
are not nilpotently algebraically realizable.
The analogy with Theorem~\ref{theoremBTEresult} suggests that
in each case this gives a compelling notion of
`regular prime' for that particular sequence.
A further calculation shows that the only magical
sequence in the list below is the sequence of
tangent numbers, illustrating again how unexpected it
is to find non-trivial instances of that property.
\begin{enumerate}
\item The Lucas sequence (which is realizable but not algebraically realizable)
may be defined by the linear recurrence~$a_{n+2}=a_{n+1}+a_n$ with
initial conditions~$a_1=1$ and~$a_2=3$. It begins
\[
(1, 3, 4, 7, 11, 18, 29, 47, 76, 123, 199, 322, 521,\dots)
=
\seqnum{A000032},
\]
is realizable* at
\[
(5, 11, 13, 17, 19, 29, 31, 37, 41, 53, 59, 61, 71, 73, 79, 83, 89, 97,\dots),
\]
and is not realizable{\textdagger} at
\[
(2, 3, 7, 23, 43, 47, 67, 107,\dots).
\]
\item The sequence of Domb numbers
with~$n$th term defined by
\[
\sum_{k=0}^{n}\binom{n}{k}^2\binom{2n-2k}{n-k}\binom{2k}{k}
\]
begins
\[
(4, 28, 256, 2716, 31504, 387136, 4951552,\dots)
=
\seqnum{A002895}.
\]
It is realizable by a result of Zhang~\cite{MR4722519},
is realizable* at
\[
(3, 5, 13, 17, 29, 31, 37, 41, 43, 47, 53, 59, 61, 67, 71, 73, 79,\dots),
\]
and is not realizable{\textdagger} at
\[
(2, 7, 11, 19, 23, 179,\dots).
\]
\item The sequence of Ap{\'e}ry numbers of the first kind
has~$n$th term given by
\[
\sum_{k=0}^{n}\Bigl(\binom{n}{k}\binom{n+k}{k}\Bigr)^2.
\]
It begins
\[
(1, 5, 73, 1445, 33001, 819005, 21460825,\dots)
=
\seqnum{A005259}
\]
and is realizable by Zhang~\cite{MR4722519}.
It is realizable* at
\[
(2, 3, 7, 13, 23, 29, 37, 43, 47, 53, 61, 67, 71, 79, 83, 89, 97, 101,\dots)
\]
and is not realizable{\textdagger} at
\[
(5, 11, 17, 19, 31, 41, 59, 73,\dots).
\]
\item The sequence of Ap{\'e}ry numbers of the second kind
with~$n$th term given by
\[
\sum_{k=0}^{n}\binom{n}{k}^2\binom{n+k}{k}
\]
begins
\[
(1, 3, 19, 147, 1251, 11253, 104959, 1004307,\dots)
=
\seqnum{A005258}.
\]
It is realizable by Zhang~\cite{MR4722519},
is realizable* at
\[
(2, 5, 13, 17, 23, 29, 37, 41, 43, 47, 53, 59, 61, 67, 73, 79, 89,\dots),
\]
and is not realizable{\textdagger} at
\[
(3, 7, 11, 19, 31, 71, 83, 139, 157,\dots).
\]
\item The sequence of quadrinomial coefficients
with~$n$th term equal to the coefficient of~$x^n$
in~$(1+x+x^2+x^3)^n$ begins
\[
(1, 3, 10, 31, 101, 336, 1128, 3823, 13051, 44803,\dots)
=
\seqnum{A005725}.
\]
It is realizable by Zhang~\cite{MR4722519},
is realizable* at
\[
(23, 41, 61, 73, 79, 83, 89, 97, 103, 107, 109, 113,\dots),
\]
and is not realizable{\textdagger} at
\[
(2, 3, 5, 7, 11, 13, 17, 19, 29, 31, 37, 43, 47, 53, 59, 67,\dots).
\]
\item The Fibonacci sequence sampled along the squares
and multiplied by~$5$,
\[
(5, 15, 170, 4935, 375125, 74651760, 38893710245, \dots)
=
5\times\seqnum{A054783}
\]
is realizable by Moss and Ward~\cite{MR4394356}.
It is realizable*
at
\[
(3, 5, 11, 19, 29, 31, 37, 41, 43, 59, 61, 67, 71, 73, 79,\dots)
\]
and not realizable{\textdagger} at
\[
(2, 7, 13, 17, 23, 47, 53, 97, 107,\dots).
\]
\item The Catalan--Larcombe--French sequence
(whose terms arise
as coefficients in the series expansion of an elliptic integral of the first kind)
begins
\[
(8, 80, 896, 10816, 137728, 1823744, 24862720, \dots)
=
\seqnum{A053175}.
\]
It may be shown to be realizable using known
congruences from work of Ji and Sun~\cite{MR3666639}.
It is realizable* at
\[
(3, 11, 17, 19, 43, 59, 73, 83, 89,\dots)
\]
and not realizable{\textdagger} at
\[
(2, 5, 7, 13, 23, 29, 31, 37, 41, 47, 53, 61, 67, 71, 79, 97,\dots).
\]
\item The~$(n-1)$th `tangent' number is~$2^{2n}\bigl(2^{2n}-1\bigr)
\frac{\vert B_{2n}\vert}{2n}$, giving the sequence
\[
(2, 16, 272, 7936, 353792, 22368256, 1903757312,\dots)
=
\seqnum{A000182}.
\]
As pointed out by Arias de Reyna this
is realizable and (easily shown to be) magical.
It is realizable* at
\[
(3, 5, 7, 11, 13, 19, 23, 29, 47, 53, 59, 61, 67, 71, 79, 83, 97,\dots)
\]
and not realizable{\textdagger} at
\[
(2, 17, 31, 37, 41, 43, 73, 89,\dots).
\]
\end{enumerate}

Finally, we record a remarkable
infinite
family of sequences
arising in path combinatorics. Let~$a_n^{(k)}$
denote the number of paths from~$(n,\dots,n)$
to~$(0,\dots,0)$ in~$\mathbb{Z}^k$ in which
each step decreases one or more components by~$1$, so
\[
a_n^{(k)}
=
\sum_{j=0}^{kn}\sum_{i=0}^{j}(-1)^j\binom{j}{i}\binom{j-i}{n}^k
\]
for all~$n\ge0$ and~$k\ge2$.
For example,~$a^{(2)}=(a_n^{(2)})$
is the sequence of central Delannoy numbers
\[
(1, 3, 13, 63, 321, 1683, 8989, 48639, 265729, 1462563,\dots)
=
\seqnum{A001850}.
\]
Using known congruences for the binomial coefficients
it is possible to show that each of these
sequences~$a^{(k)}$ is realizable,
but calculations show they are not
realizable at every prime
as follows:
\begin{enumerate}
\setcounter{enumi}{8}
\item For~$k=2$ the sequence~$a^{(2)}$
is realizable* at
\[
(2, 5, 29, 37, 41, 59, 61, 67, 73, 83,\dots)
\]
and not realizable{\textdagger} at
\[
(3, 7, 11, 13, 17, 19, 23, 31, 43, 47, 53, 71, 79, 89, 97,\dots).
\]
\item For~$k=3$ we have
\[
a^{(3)}
=
(1, 13, 409, 16081, 699121, 32193253, 1538743249,\dots)
=
\seqnum{A126086}.
\]
It is realizable* at
\[
(2, 3, 5, 7, 17, 19, 23, 29, 37, 43, 47, 53, 67, 71, 83, 89, 97,\dots)
\]
and not realizable{\textdagger} at
\[
(11, 13, 31, 41, 59, 61, 73, 79,\dots).
\]
\item For~$k=4$ we have
\[
a^{(4)}
=
(1, 75, 23917, 10681263, 5552351121,\dots)
=
\seqnum{A263064}.
\]
It is realizable* at
\[
(2, 7, 17, 23, 29, 31, 37, 47, 53, 61, 67, 71, 73, 79, 83, 89, 97,\dots)
\]
and not realizable{\textdagger} at
\[
(3, 5, 11, 13, 19, 41, 43, 59,\dots).
\]
\item For~$k=5$ we have
\[
a^{(5)}
=
(1, 541, 2244361, 14638956721, 117029959485121,\dots)
=
\seqnum{A263065}.
\]
It is realizable* at
\[
(2, 3, 5, 11, 13, 17, 19, 23, 29, 31, 37, 41, 43, 47, 53, 59, 61, 67, 71, 73, \dots)
\]
and not realizable{\textdagger} at
\[
(7,\dots).
\]
\end{enumerate}

\section*{Acknowledgments}

This work is largely the product of discussions over some years
with Patrick Moss, and many of these ideas first appeared
in his thesis~\cite{pm}.
He passed away early in~2024~\cite{ward2024patrickmoss251019471732024}
and we dedicate this modest work to his memory.
We also thank Piotr Miska for telling us about
the work of Mateusz Rajs~\cite{rajs},
Grzegorz Graff for introducing the third
named author to Dorota Chanko and explaining her work,
and an anonymous referee for a particularly careful reading
and suggestions which improved the exposition.

This work was supported by the Fundamental
Fund 2026 of Khon Kaen University which has
received funding support from the National Science Research and Innovation Fund (NSRF)
of Thailand.

\section*{Data availability statement}

No data beyond what is described above and obtainable
from the OEIS~\cite{OEIS} has been used
in this manuscript.


\begin{thebibliography}{27}
\expandafter\ifx\csname natexlab\endcsname\relax\def\natexlab#1{#1}\fi
\providecommand{\url}[1]{\texttt{#1}}
\providecommand{\href}[2]{#2}
\providecommand{\path}[1]{#1}
\providecommand{\DOIprefix}{doi:}
\providecommand{\ArXivprefix}{arXiv:}
\providecommand{\URLprefix}{URL: }
\providecommand{\Pubmedprefix}{pmid:}
\providecommand{\doi}[1]{\href{http://dx.doi.org/#1}{\path{#1}}}
\providecommand{\Pubmed}[1]{\href{pmid:#1}{\path{#1}}}
\providecommand{\bibinfo}[2]{#2}
\ifx\xfnm\relax \def\xfnm[#1]{\unskip,\space#1}\fi
\bibitem[{Adams(1878)}]{zbMATH02710998}
\bibinfo{author}{Adams, J.C.}, \bibinfo{year}{1878}.
\newblock \bibinfo{title}{Table of the values of the first sixty-two numbers of
  {Bernoulli}.}
\newblock \bibinfo{journal}{J. Reine Angew. Math.} \bibinfo{volume}{85},
  \bibinfo{pages}{269--272}.
\newblock \URLprefix \url{https://eudml.org/doc/148378}.
\bibitem[{Arias~de Reyna(2005)}]{zbMATH02203410}
\bibinfo{author}{Arias~de Reyna, J.}, \bibinfo{year}{2005}.
\newblock \bibinfo{title}{Dynamical zeta functions and {Kummer} congruences}.
\newblock \bibinfo{journal}{Acta Arith.} \bibinfo{volume}{119},
  \bibinfo{pages}{39--52}.
\newblock \DOIprefix\doi{10.4064/aa119-1-3}.
\bibitem[{Baier et~al.(2013)Baier, Jaidee, Stevens and Ward}]{MR3033948}
\bibinfo{author}{Baier, S.}, \bibinfo{author}{Jaidee, S.},
  \bibinfo{author}{Stevens, S.}, \bibinfo{author}{Ward, T.},
  \bibinfo{year}{2013}.
\newblock \bibinfo{title}{Automorphisms with exotic orbit growth}.
\newblock \bibinfo{journal}{Acta Arith.} \bibinfo{volume}{158},
  \bibinfo{pages}{173--197}.
  \DOIprefix\doi{10.4064/aa158-2-5}.
\bibitem[Boudreau et~al. (2025)]{zbMATH08045547}
\bibinfo{author}{Baril Boudreau, F.}, \bibinfo{author}{Holmes, E.}, \bibinfo{author}{Nguyen, K. D.},
\bibinfo{year}{2025}
\newblock\bibinfo{title}{Adelic perturbation of rational functions and applications}.
\newblock\bibinfo{journal}{Math. Ann.}
\bibinfo{volume}{392}, \bibinfo{number}{2}, \bibinfo{pages}{2253--2275}.
\newblock \DOIprefix\doi{10.1007/s00208-025-03155-0}.
\bibitem[{Carlitz(1954)}]{zbMATH03092654}
\bibinfo{author}{Carlitz, L.}, \bibinfo{year}{1954}.
\newblock \bibinfo{title}{Note on irregular primes}.
\newblock \bibinfo{journal}{Proc. Am. Math. Soc.} \bibinfo{volume}{5},
  \bibinfo{pages}{329--331}.
\newblock \DOIprefix\doi{10.2307/2032249}.
\bibitem[{Bell et~al.(2014)Bell, Miles and Ward}]{MR3217030}
\bibinfo{author}{Bell, J.}, \bibinfo{author}{Miles, R.}, \bibinfo{author}{Ward,
  T.}, \bibinfo{year}{2014}.
\newblock \bibinfo{title}{Towards a {P}\'olya-{C}arlson dichotomy for algebraic
  dynamics}.
\newblock \bibinfo{journal}{Indag. Math. (N.S.)} \bibinfo{volume}{25},
  \bibinfo{pages}{652--668}.
  \DOIprefix\doi{10.1016/j.indag.2014.04.005}.
\bibitem[{Bell et~al.(2023)Bell, Gunn, Nguyen and Saunders}]{MR4586814}
\bibinfo{author}{Bell, J. P.}, \bibinfo{author}{Gunn, K.}, \bibinfo{author}{Nguyen, K. D.}, \bibinfo{author}{Saunders, J. C.},
\bibinfo{year}{2023}.
\newblock\bibinfo{title}{A general criterion for the {P}\'olya-{C}arlson dichotomy and application}.
\newblock\bibinfo{journal}{Trans. Amer. Math. Soc.}
\bibinfo{volume}{376}, \bibinfo{number}{6}, \bibinfo{pages}{4361--4382}.
\newblock \DOIprefix\doi{10.1090/tran/8876}.
\bibitem[{Chothi et~al.(1997)Chothi, Everest and Ward}]{MR1461206}
\bibinfo{author}{Chothi, V.}, \bibinfo{author}{Everest, G.},
  \bibinfo{author}{Ward, T.}, \bibinfo{year}{1997}.
\newblock \bibinfo{title}{{$S$}-integer dynamical systems: periodic points}.
\newblock \bibinfo{journal}{J. Reine Angew. Math.} \bibinfo{volume}{489},
  \bibinfo{pages}{99--132}.
  \DOIprefix\doi{10.1515/crll.1997.489.99}.
\bibitem[{Cohen et~al.(1974)Cohen, Odoni and Stothers}]{MR340202}
\bibinfo{author}{Cohen, S.D.}, \bibinfo{author}{Odoni, R.W.K.},
  \bibinfo{author}{Stothers, W.W.}, \bibinfo{year}{1974}.
\newblock \bibinfo{title}{On the least primitive root modulo {$p\sp{2}$}}.
\newblock \bibinfo{journal}{Bull. London Math. Soc.} \bibinfo{volume}{6},
  \bibinfo{pages}{42--46}.
  \DOIprefix\doi{10.1112/blms/6.1.42}.
\bibitem[{Ernvall(1975)}]{zbMATH03489197}
\bibinfo{author}{Ernvall, R.}, \bibinfo{year}{1975}.
\newblock \bibinfo{title}{On the distribution mod 8 of the {E}-irregular
  primes}.
\newblock \bibinfo{journal}{Ann. Acad. Sci. Fenn., Ser. A I, Math.}
  \bibinfo{volume}{1}, \bibinfo{pages}{195--198}.
\newblock \DOIprefix\doi{10.5186/aasfm.1975.0102}.
\bibitem[{Everest et~al.(2007)Everest, Miles, Stevens and Ward}]{MR2339472}
\bibinfo{author}{Everest, G.}, \bibinfo{author}{Miles, R.},
  \bibinfo{author}{Stevens, S.}, \bibinfo{author}{Ward, T.},
  \bibinfo{year}{2007}.
\newblock \bibinfo{title}{Orbit-counting in non-hyperbolic dynamical systems}.
\newblock \bibinfo{journal}{J. Reine Angew. Math.} \bibinfo{volume}{608},
  \bibinfo{pages}{155--182}.
  \DOIprefix\doi{10.1515/CRELLE.2007.056}.
\bibitem[{Everest et~al.(2010)Everest, Miles, Stevens and Ward}]{MR2550149}
\bibinfo{author}{Everest, G.}, \bibinfo{author}{Miles, R.},
  \bibinfo{author}{Stevens, S.}, \bibinfo{author}{Ward, T.},
  \bibinfo{year}{2010}.
\newblock \bibinfo{title}{Dirichlet series for finite combinatorial rank
  dynamics}.
\newblock \bibinfo{journal}{Trans. Amer. Math. Soc.} \bibinfo{volume}{362},
  \bibinfo{pages}{199--227}.
  \DOIprefix\doi{10.1090/S0002-9947-09-04962-9}.
\bibitem[{Everest and Ward(1999)}]{MR1700272}
\bibinfo{author}{Everest, G.}, \bibinfo{author}{Ward, T.},
  \bibinfo{year}{1999}.
\newblock \bibinfo{title}{Heights of polynomials and entropy in algebraic
  dynamics}.
\newblock Universitext, \bibinfo{publisher}{Springer-Verlag London, Ltd.,
  London}.
  \DOIprefix\doi{10.1007/978-1-4471-3898-3}.
\bibitem[{Hall(1933)}]{MR1574113}
\bibinfo{author}{Hall, M.}, \bibinfo{year}{1933}.
\newblock \bibinfo{title}{Slowly {I}ncreasing {A}rithmetic {S}eries}.
\newblock \bibinfo{journal}{J. London Math. Soc.} \bibinfo{volume}{8},
  \bibinfo{pages}{162--166}.
  \DOIprefix\doi{10.1112/jlms/s1-8.3.162}.
\bibitem[{Ji and Sun(2017)}]{MR3666639}
\bibinfo{author}{Ji, X.J.}, \bibinfo{author}{Sun, Z.H.}, \bibinfo{year}{2017}.
\newblock \bibinfo{title}{Congruences for {C}atalan--{L}arcombe--{F}rench
  numbers}.
\newblock \bibinfo{journal}{Publ. Math. Debrecen} \bibinfo{volume}{90},
  \bibinfo{pages}{387--406}.
  \DOIprefix\doi{10.5486/PMD.2017.7598}.
\bibitem[{Lehmer(1933)}]{MR1503118}
\bibinfo{author}{Lehmer, D.H.}, \bibinfo{year}{1933}.
\newblock \bibinfo{title}{Factorization of certain cyclotomic functions}.
\newblock \bibinfo{journal}{Ann. of Math. (2)} \bibinfo{volume}{34},
  \bibinfo{pages}{461--479}.
  \DOIprefix\doi{10.2307/1968172}.
\bibitem[{Miska and Ward(2021)}]{MR4361581}
\bibinfo{author}{Miska, P.}, \bibinfo{author}{Ward, T.}, \bibinfo{year}{2021}.
\newblock \bibinfo{title}{Stirling numbers and periodic points}.
\newblock \bibinfo{journal}{Acta Arith.} \bibinfo{volume}{201},
  \bibinfo{pages}{421--435}.
  \DOIprefix\doi{10.4064/aa210309-9-8}.
\bibitem[{Moss(2003)}]{pm}
\bibinfo{author}{Moss, P.}, \bibinfo{year}{2003}.
\newblock \bibinfo{title}{The arithmetic of realizable sequences}.
\newblock Ph.D. thesis. University of East Anglia.
\bibitem[{Moss and Ward(2022)}]{MR4394356}
\bibinfo{author}{Moss, P.}, \bibinfo{author}{Ward, T.}, \bibinfo{year}{2022}.
\newblock \bibinfo{title}{Fibonacci along even powers is (almost) realizable}.
\newblock \bibinfo{journal}{Fibonacci Quart.} \bibinfo{volume}{60},
  \bibinfo{pages}{40--47}.
  \DOIprefix\doi{10.1080/00150517.2022.12427495}.
\bibitem[{Inc.(2025)}]{OEIS}
\bibinfo{author}{OEIS Foundation Inc.}, \bibinfo{year}{2025}.
\newblock \bibinfo{title}{{The On-Line Encyclopedia of Integer Sequences}}.
\newblock \URLprefix \url{https://oeis.org}.
\bibitem[{Rajs(2024)}]{rajs}
\bibinfo{author}{Rajs, M.}, \bibinfo{year}{2025}.
\newblock \bibinfo{title}{On {D}old condition and fail factor of linear
  recurrent sequences}.
\newblock \URLprefix\url{https://arxiv.org/abs/2509.09847}.
\bibitem[{Sun(2012)}]{zbMATH06015648}
\bibinfo{author}{Sun, Z.}, \bibinfo{year}{2012}.
\newblock \bibinfo{title}{Congruences for sequences similar to {Euler}
  numbers}.
\newblock \bibinfo{journal}{J. Number Theory} \bibinfo{volume}{132},
  \bibinfo{pages}{675--700}.
\newblock \DOIprefix\doi{10.1016/j.jnt.2011.08.005}.
\bibitem[{Wagstaff(2002)}]{MR1956285}
\bibinfo{author}{Wagstaff, Jr., S.S.}, \bibinfo{year}{2002}.
\newblock \bibinfo{title}{Prime divisors of the {B}ernoulli and {E}uler
  numbers}, in: \bibinfo{booktitle}{Number theory for the millennium, {III}
  ({U}rbana, {IL}, 2000)}. \bibinfo{publisher}{A K Peters, Natick, MA}, pp.
  \bibinfo{pages}{357--374}.
\bibitem[{Ward(1997)}]{MR1458718}
\bibinfo{author}{Ward, T.}, \bibinfo{year}{1997}.
\newblock \bibinfo{title}{An uncountable family of group automorphisms, and a
  typical member}.
\newblock \bibinfo{journal}{Bull. London Math. Soc.} \bibinfo{volume}{29},
  \bibinfo{pages}{577--584}.
  \DOIprefix\doi{10.1112/S0024609397003330}.
\bibitem[{Ward(2024)}]{ward2024patrickmoss251019471732024}
\bibinfo{author}{Ward, T.}, \bibinfo{year}{2024}.
\newblock \bibinfo{title}{{Patrick Moss} 25/10/1947--17/3/2024}.
\newblock \URLprefix\url{http://arxiv.org/abs/2404.03464}.
\bibitem[{Windsor(2008)}]{MR2422026}
\bibinfo{author}{Windsor, A.J.}, \bibinfo{year}{2008}.
\newblock \bibinfo{title}{Smoothness is not an obstruction to realizability}.
\newblock \bibinfo{journal}{Ergodic Theory Dynam. Systems}
  \bibinfo{volume}{28}, \bibinfo{pages}{1037--1041}.
  \DOIprefix\doi{10.1017/S0143385707000715}.
\bibitem[{Young(1999)}]{MR1713481}
\bibinfo{author}{Young, P.T.}, \bibinfo{year}{1999}.
\newblock \bibinfo{title}{Congruences for {B}ernoulli, {E}uler, and {S}tirling
  numbers}.
\newblock \bibinfo{journal}{J. Number Theory} \bibinfo{volume}{78},
  \bibinfo{pages}{204--227}.
  \DOIprefix\doi{10.1006/jnth.1999.2401}.
\bibitem[{Zhang(2024)}]{MR4722519}
\bibinfo{author}{Zhang, G.R.}, \bibinfo{year}{2024}.
\newblock \bibinfo{title}{Realizability of some combinatorial sequences}.
\newblock \bibinfo{journal}{J. Integer Seq.} \bibinfo{volume}{27},
  \bibinfo{pages}{Art. 24.3.3, 25}.
\newblock \URLprefix\url{https://cs.uwaterloo.ca/journals/JIS/VOL27/Zhang/zhang9.html}
\end{thebibliography}

\end{document}